\newcommand{\numberset}{\mathbb}
\newcommand{\R}{\numberset{R}}
\newcommand{\B}{\numberset{B}}
\newcommand{\Pk}{\numberset{P}}
\renewcommand{\epsilon}{\varepsilon}
\renewcommand{\theta}{\vartheta}
\renewcommand{\rho}{\varrho}
\renewcommand{\phi}{\varphi}
\lbrace\begin{array}{@{}l@{}}}%
\theoremstyle{definition}
\theoremstyle{remark}
\newtheorem{remark}{Remark}[section]
\theoremstyle{remark}
\newtheorem{test}{Test}[section]
\theoremstyle{plain}
\newtheorem{theorem}{Theorem}[section]
\newtheorem{proposition}{Proposition}[section]
\newtheorem{lemma}{Lemma}[section]
\author{Giuseppe Vacca  
  \thanks{Dipartimento di Matematica,  Universit\`a degli Studi di Bari, Via Edoardo Orabona, 4 - 70125 Bari, E-mail: giuseppe.vacca@uniba.it.} 
  }
\title{\textbf{Virtual Element Methods for hyperbolic problems on polygonal meshes}}
\date{\today}
\begin{document}

\maketitle
\begin{abstract}
In the present paper we develop  the Virtual Element Method for hyperbolic problems on polygonal meshes, considering the linear wave equations as our model problem. After presenting the semi-discrete scheme, we derive the convergence estimates in $H^1$ semi-norm and $L^2$ norm.  Moreover we develop a theoretical analysis on the stability for the fully discrete problem by comparing the Newmark method and the Bathe method. Finally we show the practical behaviour of the proposed method through a large array of numerical tests.
\end{abstract}

% -------------------------------------------------------------------
\section{Introduction}
\label{sec:1}
% -------------------------------------------------------------------

The \textbf{Virtual Element Methods} (in short, VEM or VEMs) is a very recent technique for solving partial differential equations. VEMs were lately introduced in \cite{VEM-volley} as a generalization of the finite element method on polyhedral or polygonal meshes. 

The \textbf{virtual element spaces} are similar to the usual polynomial spaces with the addition of suitable (and unknown!) non-polynomial functions. The main idea behind VEM is to define \textbf{approximated discrete bilinear forms} that are \textbf{computable} only using the \textbf{degrees of freedom}. The key of the method is to define suitable \textbf{projections} (for instance gradient projection or $L^2$ projection) onto the space of polynomials that are computable on the basis of the degrees of freedom.  Using  these projections, the bilinear forms (e.g. the stiffness matrix, the mass matrix and so on)  require only integration of polynomials on the (polytopal) element in order to be computed. Moreover, the ensuing discrete solution is conforming and the accuracy granted by such discrete bilinear forms turns out to be sufficient to recover the correct order of convergence.
Following such approach, VEM is able to make use of very general polygonal/polyhedral meshes without the need to integrate complex non-polynomial functions on the elements (as polygonal FEM do) and without loss of accuracy. As a consequence, VEM is not restricted to low order converge and can be easily applied to three dimensions and use non convex (even non simply connected) elements.

An additional peculiarity of the VEMs is the satisfaction of the \textbf{patch test} used
by engineers for testing the quality of the methods. Roughly speaking, a method satisfies the patch test if it is able to give the exact solution whenever this is a global polynomial of the selected degree of accuracy.

In \cite{VEM-enhanced} the authors introduce a variant of the virtual element method presented in \cite{VEM-volley} that allows to compute the exact $L^2$ projection of the virtual space onto the space of polynomials and extends the VEMs technology to the three-dimensional case. A helpful paper for the computer implementation of the method is \cite{VEM-hitchhikers}. In \cite{VEM-conforming} the authors construct Virtual Element spaces that are $H({\rm div})$-conforming and $H({\rm curl})$-conforming. 

The Virtual Element Method has been developed successfully for a large range of problems:  the linear elasticity problems, both for the compressible and the nearly incompressible case \cite{VEM-elasticity, paulinopost}, a stream formulation of VEMs for the Stokes problem \cite{VEM-stream}, the non-linear elastic and inelastic deformation problems, mainly focusing on a small deformation regime \cite{BLM15}, the Darcy problem in mixed form \cite{VEM-mixed}, the plate bending problem \cite{Brezzi:Marini:plates}, the Steklov eigenvalue problem \cite{mora2015virtual}, the general second order elliptic problems in primal \cite{VEM-general} and mixed form \cite{VEM-mixedgeneral}, the Cahn-Hilliard equation \cite{VEM-cahn}, the Helmholtz problem \cite{VEM-helmholtz},  the discrete fracture network simulations \cite{berrone}, the time-dependent diffusion problems \cite{vaccabeirao} and the Stokes problem \cite{VEM-preprint}.  In \cite{VEM-nonconforming, VEM-nonconforming1} the authors present a non-conforming Virtual Element Space. 
%In \cite{VEM-hp} it is shown the analysis of $hp$ VEMs.
%Additional technical results are presented in \cite{BM13, GTP14, VEM-suku}.
Finally in \cite{VEM-serendipity} the authors introduce the last version of Virtual Element spaces, the Serendipity VEM spaces that, in analogy with the Serendipity FEMs, allows to reduce the number of degrees of freedom.

Recently in \cite{lopezvacca, mfdpreprint}, Mimetic Finite Difference methods \cite{BLM11book} (technique having common features with VEM) have been applied to the space discretization of PDEs of parabolic and hyperbolic type in two dimension, showing how this technique preserves invariants of the solution better than classical space discretizations such as finite difference methods. In the present contribution we develop  the Virtual Element Method for hyperbolic problems. We consider as a model problem the classical time-dependent wave equations. The discretisation of the problem requires the introduction of two discrete bilinear forms, one being the approximated grad-grad form of the stationary case \cite{VEM-volley} and the other being a discrete counterpart of the $L^2$ scalar product. The latter is built making use of the enhancements techniques of \cite{VEM-enhanced}. In the paper we focus our attention on the bi-dimensional case and we develop a full theoretical analysis, first analysing on the error between the semi-discrete and the continuous problems and later giving two examples of fully discrete problems. Finally, a large range of numerical tests in accordance with the theoretical derivations is presented. 

%In particular, we study also the possibility to use a simpler but non-coercive $L^2$ discrete bilinear form. Since the low modes are expected to dominate the problem, this choice leads to good results in a lot of situations, as shown by the numerical tests.  

The paper is organized as follows. In Section \ref{sec:2} we introduce the model continuous problem. 
In Section \ref{sec:3} we present its VEM discretisation and the analysis of the error for the semi-discrete problem. 
In Section \ref{sec:4} we detail the theoretical features of the fully discrete scheme, in particular we analyse the convergence and the stability properties for the fully discrete problem by using the Newmark method and the Bathe method as time integrator method.  
Finally, in Section \ref{sec:5} we show the numerical tests.

% -------------------------------------------------------------------
\section{The continuous problems}
\label{sec:2}
% -------------------------------------------------------------------

We consider the second order evolution problems in time, in particular we study the wave equations as model hyperbolic problem.
Let  $\Omega \subset \R^2$ be the polygonal domain of interest. Then the mathematical problem is given by:
\begin{equation}
\label{eq:hyper primale}
\left\{
\begin{aligned}
& u_{tt} - \Delta u = f \qquad  \qquad & \text{ in $\Omega \times (0,T)$,} \\
& u = 0 \qquad \qquad & \text{on $\partial \Omega \times (0,T)$,} \\
& u(\cdot, 0) = u_0, \qquad   u_t(\cdot, 0) = z_0 \qquad & \text{ in $\Omega$,} 
\end{aligned}
\right.
\end{equation}
where $u$ represents the unknown variable of interest, $u_t$ and $u_{tt}$  denote respectively its first  and  second order time derivative. We assume the external force $f \in L^2(\Omega \times (0,T))$ and the initial data $u_0$, $z_0 \in H^1_0(\Omega)$.
Then a standard variational formulation of Problem \eqref{eq:hyper primale} is:
\begin{equation}
\label{eq:hyper var}
\left \{
\begin{aligned}
& \text{find $u \in C^0(0, T; \, H^1_0(\Omega)) \cap C^1(0,T; \, L^2(\Omega))
$, such that} \\
& ( u_{tt}(t), \, v ) + a(u(t), \, v) =  \langle f(t), \, v \rangle \quad \text{for all $v \in H^1_0(\Omega)$, for a.e. $t$  in  $(0,T)$} \\
& u(0) = u_0, \qquad u_t(0) = z_0,
\end{aligned}
\right.
\end{equation}
where 
\begin{itemize}
\item the derivative $u_{tt}$ above is to be intended in the weak sense in $(0, T)$,
\item $( \cdot \, , \, \cdot) \colon L^2(\Omega) \times L^2(\Omega) \to \R$ denotes the standard $L^2$ scalar product on $\Omega$,
\item $a( \cdot \, , \, \cdot) \colon H^1_0(\Omega) \times H^1_0(\Omega) \to \R$ denotes the grad-grad form $a(u, \, v) = (\nabla u, \, \nabla v)$,
\item $\langle f(t), \,  \cdot \rangle \colon H^1_0(\Omega) \to \R$ denotes the duality product in $H_0^1(\Omega)$.
\end{itemize}

It is well known (see for instance \cite{raviart}) that the bilinear form $a( \cdot \, , \, \cdot)$ is continuous and coercive, i.e. there exist two uniform positive constant $a$ and $\alpha$ such that
\[
a( u , \, v) \leq a \|u\|_{H^1(\Omega)} \|v\|_{H^1(\Omega)} \qquad  a( v , \, v) \geq \alpha \|v\|^2_{H^1(\Omega)} \qquad \text{for all $u, \, v \in H^1_0(\Omega)$,}
\]
then Problem \eqref{eq:hyper var} has a unique solution $u(t)$ such that
\[
\left( a(u(t), \, u(t)) + \|u_t(t)\|^2_{L^2(\Omega)} \right)^{\frac{1}{2}} \leq \left( a(u_0, \, u_0) + \|z_0\|^2_{L^2(\Omega)} \right)^{\frac{1}{2}} + |f|_{L^1(0, t, \, L^2(\Omega))} \quad \text{$\forall t \in (0, T)$.}
\]

In the rest of the paper we will make use of the following notation. 
We will indicate the classical Sobolev semi-norms (and analogously for the norms) with the shorter symbols 
\[
|v|_{s}=|v|_{H^s(\Omega)} \ , \qquad |v|_{s,\omega}=|v|_{H^s(\omega)}
\]
for any non-negative constant $s\in\mathbb{R}$, open subset $\omega \subseteq \Omega$ and for all $v\in H^s(\Omega)$, 
%Moreover, we will use the symbol $(\cdot,\cdot)_{\omega}$ in order to denote the $L^2$ scalar product on the open subset $\omega \subseteq \Omega$, 
while $C$ will denote a generic positive constant independent of the mesh diameter $h$ and time step size $\tau$ and that may change at each occurrence.

% -------------------------------------------------------------------
\section{Virtual formulation of the wave equations}
\label{sec:3}
%--------------------------------------------------------------------

We here outline the Virtual Element discretization of problem \eqref{eq:hyper var}.  We will make use of various tools from the Virtual Element technology, that will be described briefly; we  
refer the interested reader to the papers \cite{VEM-volley,VEM-enhanced,VEM-hitchhikers}) for a deeper presentation. Finally we derive the convergence estimates in $H^1$ semi-norm and $L^2$ norm.

\subsection{Virtual spaces and bilinear forms}
\label{sub:3.1}

In the outline below we focus on the bi-dimensional case $d=2 $, the three-dimensional one being analogous but more technical.
We start by introducing the Virtual Element space used in the Galerkin-like discretisation of problem \eqref{eq:hyper var}; this needs a few steps.

Let $\mathcal{T}_h$ be an unstructured mesh of $\Omega$ into nonoverlapping polygons with flat faces, where
\[
h_E :=  {\rm diameter}(E) \qquad h := \sup_{E \in \mathcal{T}_h} h_E.
\]
In the following we take on the element $E \in \mathcal{T}_h$ the regularity assumptions listed, for instance, in \cite{VEM-volley}. We require that for all $h$, each element $E \in \mathcal{T}_h$ fulfils the following assumptions:
\begin{itemize}
\item $\boldsymbol{A1}$: $E$ is a simply-connected  polygon with boundary made of a finite number of straight line segments,
\item $\boldsymbol{A2}$: $E$ is star-shaped with respect to a ball of radius greater than $\gamma \, h_E$,  
\item $\boldsymbol{A3}$: the distance between any two verteces of $E$ is greater that $c \, h_E$,
\end{itemize}
where $\gamma$ and $c$ are positive constant independent by $h$ and $E$.

Under the assumptions $\boldsymbol{A1}$, $\boldsymbol{A2}$, $\boldsymbol{A3}$, according with the classical \textbf{Scott-Dupont} theory (see \cite{scott}) we have the following fundamental approximation result.

\begin{theorem}
\label{thm:scott}
Let $E \in \mathcal{T}_h$ and $k \geq 1$, then for all $u \in H^{s+1}(E)$ with $0 \leq s \leq k$, there exists a  polynomial function $u_{\pi}$ on $E$ of degree less or equal than $k$,  such that
\begin{equation}
\label{eq:scottpre}
\|u- u_{\pi}\|_{0,\, E} + h_E \, \|u - u_{\pi} \|_{1, \,E} \leq C \, h_E^{s+1} \, | u|_{s+1, \, E}.
\end{equation}
\end{theorem}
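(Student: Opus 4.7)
The plan is to follow the classical Scott-Dupont argument based on the averaged Taylor polynomial combined with the Bramble-Hilbert lemma. The key geometric input is assumption $\boldsymbol{A2}$, which guarantees a ball $B \subset E$ of radius $r \geq \gamma \, h_E$ with respect to which $E$ is star-shaped; this is exactly the hypothesis required in order for the construction to produce constants depending only on $\gamma$.

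First I would fix such a ball $B$ and define $u_\pi := Q^{k+1} u$, the averaged Taylor polynomial of degree $k$ of $u$ on $E$, obtained by integrating the pointwise Taylor polynomial of $u$ against a smooth cutoff $\phi$ supported in $B$. By construction $u_\pi \in \mathbb{P}_k(E)$; moreover the map $Q^{k+1}$ is a continuous linear projector from $H^{s+1}(E)$ onto $\mathbb{P}_k$ and it reproduces polynomials, i.e. $Q^{k+1} p = p$ for every $p \in \mathbb{P}_k$. This polynomial-reproduction property is what enables the Bramble-Hilbert step.

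Next I would pass to a scaled reference element $\widehat{E} := h_E^{-1} E$, which has diameter $1$ and which, thanks to $\boldsymbol{A2}$, remains star-shaped with respect to a ball of radius $\geq \gamma$. On the family of admissible reference elements determined by $\boldsymbol{A1}$-$\boldsymbol{A3}$ the Bramble-Hilbert lemma applies uniformly: since $I - \widehat{Q}^{k+1}$ is continuous on $H^{s+1}(\widehat{E})$ and annihilates $\mathbb{P}_k$, one obtains
\[
\|\widehat{u} - \widehat{Q}^{k+1} \widehat{u}\|_{j,\widehat{E}} \leq C(\gamma, k) \, |\widehat{u}|_{s+1,\widehat{E}}, \qquad j = 0,1.
\]
Pulling this estimate back to $E$ via the standard scaling of Sobolev seminorms under dilations introduces the factor $h_E^{s+1-j}$ on the right-hand side, and summing the cases $j=0$ and $j=1$ with the weight $h_E$ yields the claim \eqref{eq:scottpre}.

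The main obstacle I anticipate is verifying that the constant $C$ is genuinely independent of $E$, i.e. that the Bramble-Hilbert constant and the operator norm of $\widehat{Q}^{k+1}$ depend only on $\gamma$ and $k$. This rests on a compactness-type argument for the class of polygons satisfying $\boldsymbol{A1}$-$\boldsymbol{A3}$; here assumption $\boldsymbol{A3}$ prevents boundary degeneracies which could otherwise spoil the implicit extension and trace constants used in the proof.
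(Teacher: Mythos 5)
The paper does not actually prove this statement: it is quoted verbatim from the classical Scott--Dupont theory (the reference \cite{scott}), and your sketch reconstructs precisely the standard argument from that source -- averaged Taylor polynomial $u_\pi = Q^{k+1}u$ over the ball of assumption $\boldsymbol{A2}$, polynomial reproduction, Bramble--Hilbert on the dilated element of unit diameter, and scaling back to pick up the factors $h_E^{s+1-j}$. The outline is correct. One refinement: the uniformity of the constant does not require the compactness-type argument you anticipate as the main obstacle; the Dupont--Scott construction is explicit and yields a constant depending only on $k$ and the chunkiness parameter $h_E/\rho \leq 1/\gamma$ of the star-shapedness ball, so that for this particular estimate assumption $\boldsymbol{A2}$ alone suffices and $\boldsymbol{A3}$ is not needed (it is used elsewhere in the VEM analysis, e.g.\ for trace and inverse inequalities on edges). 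A compactness argument over the class of admissible polygons would in fact be the more delicate route, since that class is not naturally compact without first bounding the number of edges.
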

Let $k \in \mathbb{N}$, $k \geq 1$, represent the \textbf{polynomial degree} of the method and let us introduce the following useful notations, for all $E \in \set{\mathcal{T}_h}_h$:
\begin{itemize}
\item $\Pk_k(E)$ the set of polynomials on $E$ of degree $\leq k$,
\item $\B_k(\partial E) := \{v \in C^0(\partial E) \quad \text{s.t} \quad v_{|_e} \in \Pk_k(e) \quad \text{for all edge  $e \subset \partial E$} \, \}$, 
\end{itemize}
where we use the convention $\Pk_{-1}(E) = \{ 0\}$. 
Let us introduce the local counterparts of the bilinear form $a(\cdot,\cdot)$:
\[
a(u,\, v) =: \sum_{E \in \mathcal{T}_h}a^E(u, \, v) \qquad \text{for all $u$,$v \in V$.}
\]
Let now $\Pi ^{\nabla,E}_k \colon H^1(E) \to \Pk_k(E)$ be the \textbf{energy projection operator} (i.e. the $H^1$- seminorm projector) defined by
\[
\left\{
\begin{aligned}
& a^E(q_k, \, v - \, {\Pi}_{k}^{\nabla,E}   v) = 0 \qquad  \text{for all $q_k \in \Pk_k(E)$,} \\
& P^{0,E}(v - \,  {\Pi}_{k}^{\nabla,E}  v) = 0 \, ,
\end{aligned}
\right.
\]
where $P^{0,E} \colon H^1(E) \to \R$ can be taken as
\[
\begin{split}
P^{0,E}(v) &:= \frac{1}{|\partial E|} \int_{\partial E} v\, {\rm d}s \qquad \text{for $k=1$}, \\
P^{0,E}(v) &:= \frac{1}{|E|} \int_E v \, {\rm d}x \qquad \text{for $k >1$}.
\end{split}
\]
Moreover let us denote with $\Pi^{0,E}_{k} \colon L^2(E) \to \Pk_{k}(E)$, the $\boldsymbol{L^2(E)}$ \textbf{projection operator} onto the space $\Pk_k(E)$, i.e.
\[
(q_k, \, v - \Pi^{0,E}_{k} v)_{L^2(E)} = 0 \qquad \text{for all $q_k \in \Pk_k(E)$.}
\]
It is clear that $\Pi^{\nabla, E}$ and $\Pi^{0, E}$ correspond to the identity operator on the space $\Pk_k(E)$.

For all $E \in\mathcal{T}_h$, the \textbf{augmented virtual local space} $\widehat{V}_h^E$ is defined by
\[
\widehat{V}_h^E = \left\{ v \in H^1(E)  \quad \text{s.t.} \quad  v \in \B_k(\partial E), \, \Delta v \in \Pk_{k}(E) \right\}.
\]
Now we define the \textbf{enhanced Virtual Element space}, the restriction $W_h^E$ of $\widehat{V}_h^E$ given by
\begin{equation}
\label{eq:localspace}
W_h^E := \left\{ w \in \widehat{V}_h^E \quad \text{s.t.} \quad   \left(w - \Pi^{\nabla,E}_k w, \, q \right)_{L^2(E)} \quad \text{for all $q \in \Pk_{k}(E)/\Pk_{k-2} (E)$} \right\} ,
\end{equation}
where the symbol $\Pk_{k}(E)/\Pk_{k-2} (E)$ denotes the polynomials of degree $k$ living on $E$ that are $L^2-$orthogonal to all polynomials of degree $k-2$ on $E$.
We notice that, in general, for $v \in H^1(E)$  it holds that 
\[
\int_E (\Pi^{\nabla,E}_k v) \, q \, {\rm d}E \neq \int_E v \, q \, {\rm d}E \qquad \text{for $q \in\Pk_{k}(E)/\Pk_{k-2} (E)$}.
\]
Moreover we introduce  the following set $\mathbf{D}$ of linear operators from $W_h^E$ into $\R$. For all $v \in W_h^E$ we take (see Figure \ref{fig:dofsloc}):
\begin{itemize}
\item $\mathbf{D1}$: the values of $v$ at the $n_E$ vertexes of the polygon $E$,
\item $\mathbf{D2}$:  the values of $v$ at $k-1$ distinct points of every edge $e \in \partial E$ (for example we can take the $k-1$ internal points of the $(k+1)$-Gauss-Lobatto quadrature rule  in $e$, as suggested in \cite{VEM-hitchhikers}),
\item $\mathbf{D3}$: the moments up to order $k-2$  of $v$ in $E$, i.e.
\[
\int_E v \, q_{k-2} \,{\rm d}x \qquad \text{for all $q_{k-2} \in \Pk_{k-2}(E)$.}
\]
\end{itemize}
\begin{figure}[!h]
\center{
\includegraphics[scale=0.15, draft=false]{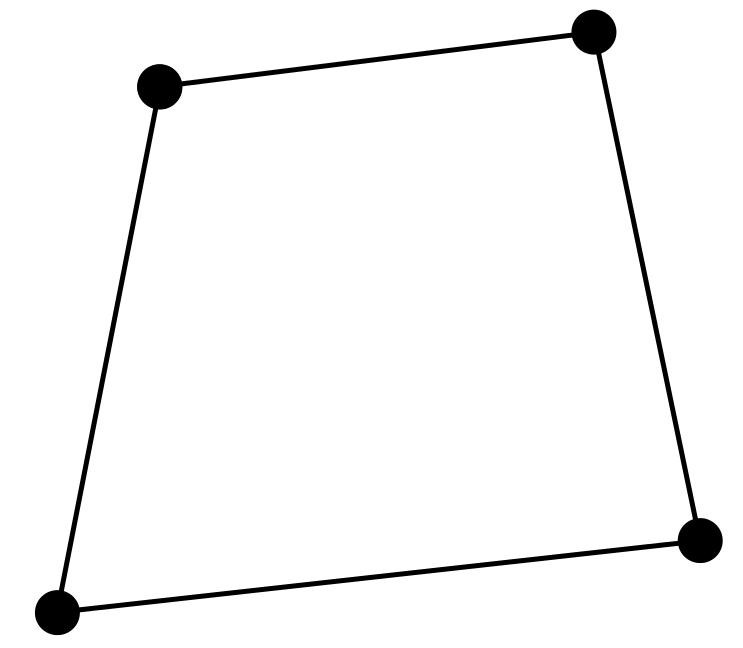} \qquad
\includegraphics[scale=0.15, draft=false]{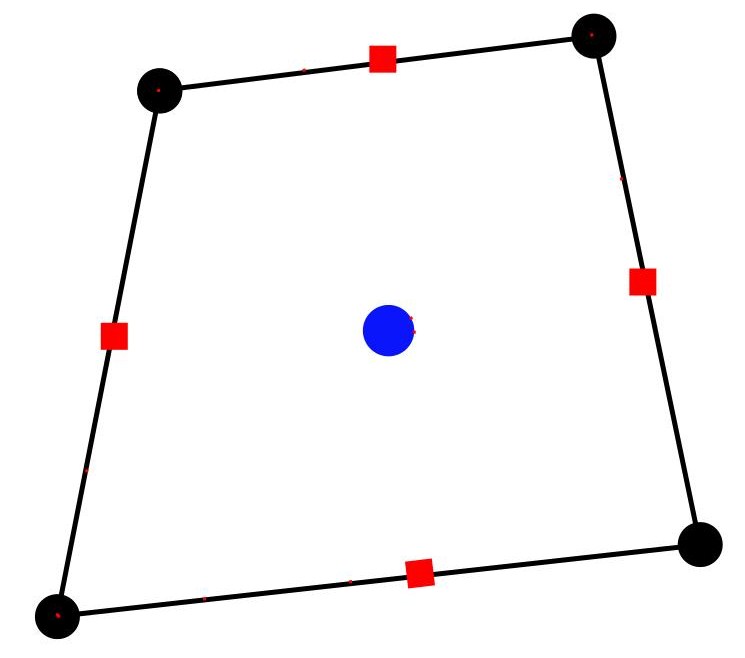} \qquad
\includegraphics[scale=0.15, draft=false]{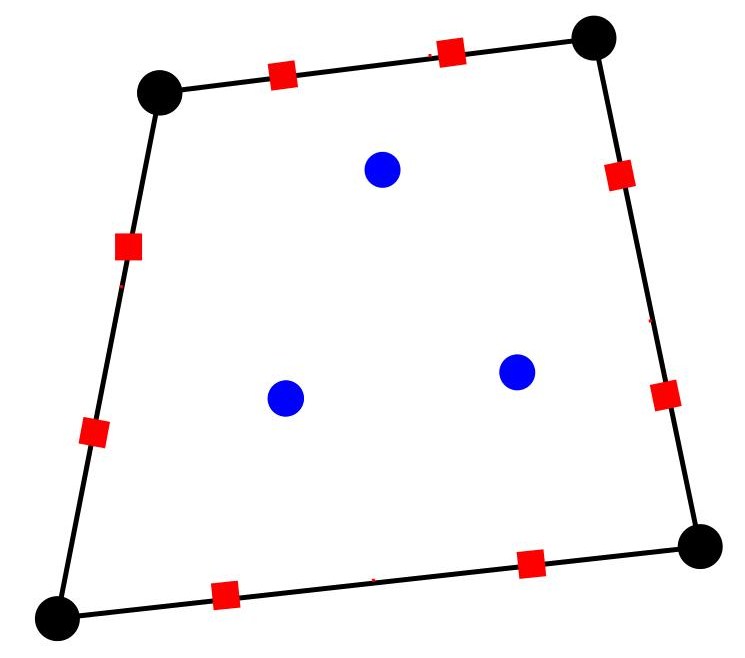}
\caption{Degrees of freedom for $k=1, 2, 3$. We denote $\mathbf{D1}$ with the black dots, $\mathbf{D2}$ with the red squares, $\mathbf{D3}$ with the blue dots.}
\label{fig:dofsloc}
}
\end{figure}

The enhanced space $W_h^E$ has three fundamental properties (see \cite{VEM-enhanced} for a proof): 
\begin{itemize}
\item $\Pk_k(E) \subseteq W_h^E$, that guarantees the good approximation properties for the space,
\item the set of linear operators $\mathbf{D1}$, $\mathbf{D2}$, $\mathbf{D3}$ constitutes a set of \textbf{degrees of freedom} for the space $W_h^E$, 
\item the energy projection operator and the $L^2$-projection operator on the space $W_h^E$
\[
\Pi^{\nabla, E}_{k} \colon W_h^E \to \Pk_k(E), \qquad  \Pi^{0, E}_{k} \colon W_h^E \to \Pk_k(E)
\] 
are \textbf{exactly computable} (only) on the basis of the degrees of freedom.
\end{itemize}
Therefore we have that
\[
{\rm dim} \left(  W_h^E \right) = n_E \, k + \frac{k(k-1)}{2},    
\]
where $n_E$ is the number of vertexes of the polygon $E$.

\begin{remark}
\label{oss:dofs}
We note that the operators $\mathbf{D1}$, $\mathbf{D2}$ are sufficient to uniquely define $v$ on the boundary of $E$. The degrees of freedom $\mathbf{D3}$ allow to compute the $L^2$ projection of the virtual space $W_h^E$ onto $\Pk_{k-2}(E)$. The condition in the definition  \eqref{eq:localspace} allows us to compute the $L^2$ projection onto $\Pk_{k}(E)$, by using the energy projection $\Pi^{\nabla, E}$.
\end{remark}

We have therefore introduced a set of local spaces $W_h^E$ and, thanks to the  properties listed here above, the associated local degrees of freedom. The global discrete space can now be assembled in the classical finite element fashion, yielding
\begin{equation}
\label{eq:globalspace}
W_h = \left\{ w \in H^1_0(\Omega) \quad \text{s.t} \quad w_{|_E} \in W_h^E  \quad \text{for all $E \in \mathcal{T}_h$} \right\} 
\end{equation}
and it holds that
\[
{\rm dim}(W_h) =   n_V + (k-1) n_e + n_P \, \frac{(k-1)(k-2)}{2}  
\]
where $n_P$ (resp. $n_e$ and $n_V$) is the number of elements (resp. internal edges and vertexes) in $\mathcal{T}_h$, and the following constitute the \textbf{global degrees of freedom} for all $v \in W_h$ (see Figure \ref{fig:dofsglob}):
\begin{itemize}
\item $\mathbf{GD1}$: the values of $v$ at the $n_V$ internal verteces,
\item $\mathbf{GD2}$:  the values of $v$ at $k-1$ distinct points of every internal edge $e$,
\item $\mathbf{GD3}$: the moments up to order $k-2$  of $v$ on each element $E \in \mathcal{T}_h$, i.e.
\[
\int_E v \, q_{k-2} \,{\rm d}x \qquad \text{for all $q_{k-2} \in \Pk_{k-2}(E)$.}
\]
\end{itemize}
\begin{figure}[!h]
\center{
\includegraphics[scale=0.15, draft=false]{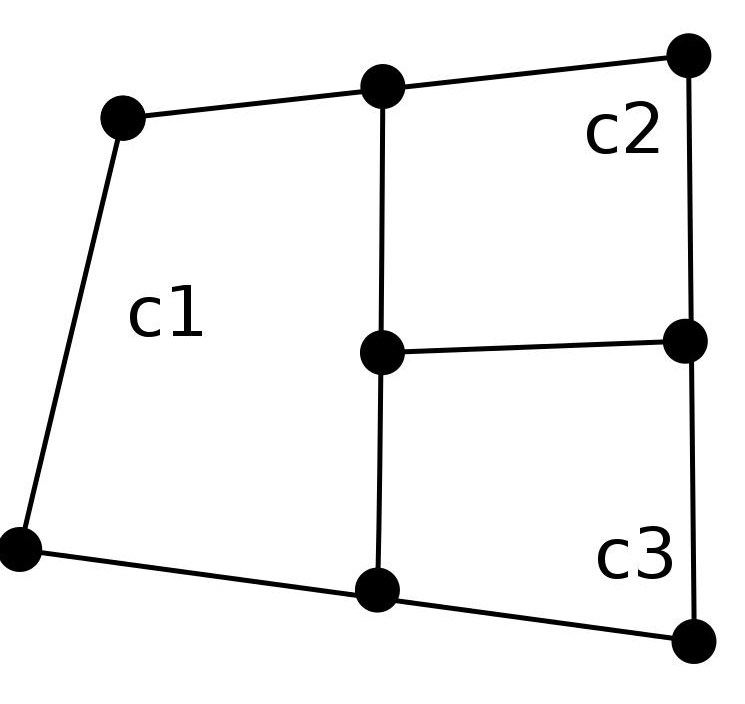} \qquad
\includegraphics[scale=0.15, draft=false]{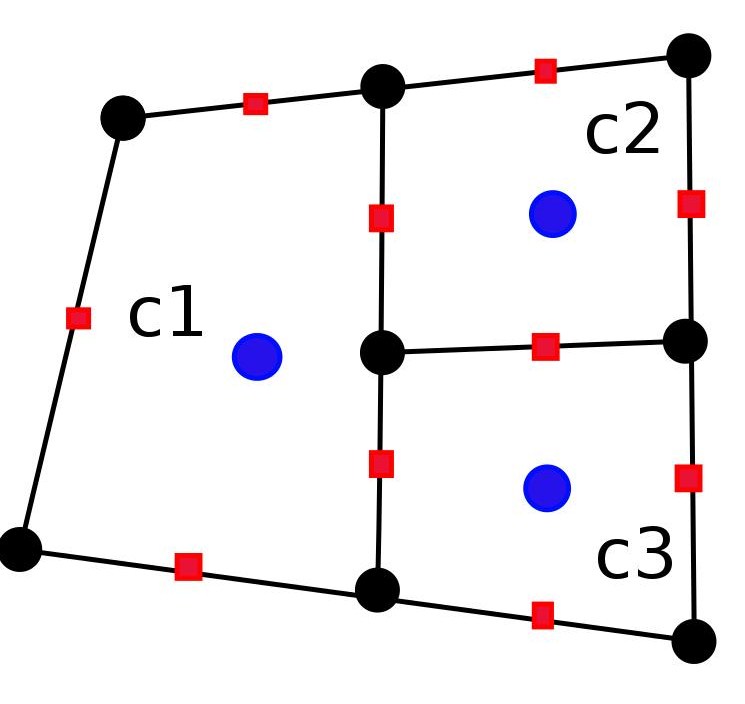} \qquad
\includegraphics[scale=0.15, draft=false]{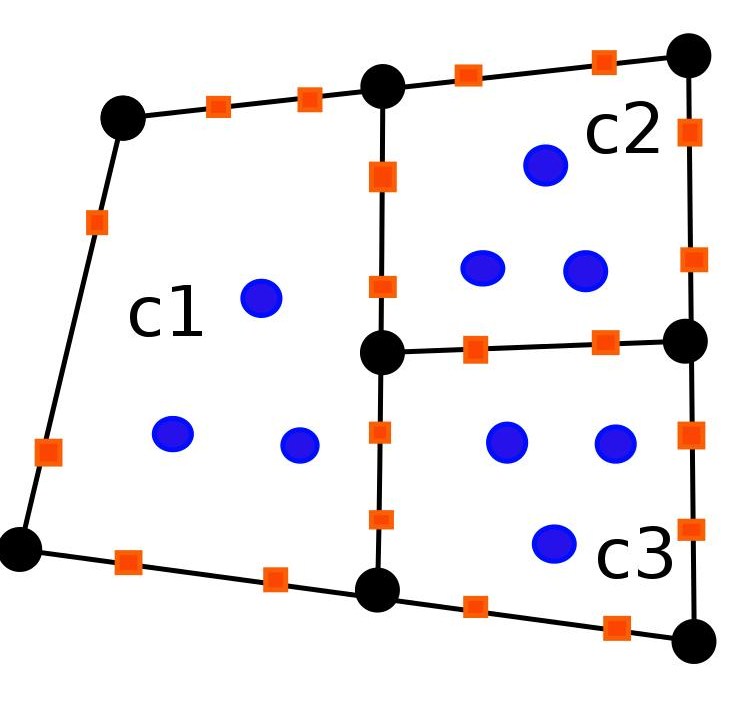}
\caption{Degrees of freedom for $k=1, 2, 3$. We denote $\mathbf{GD1}$ with the black dots, $\mathbf{GD2}$ with the red squares, $\mathbf{GD3}$ with the blue dots.}
\label{fig:dofsglob}
}
\end{figure}

The following useful approximation results hold \cite{mora2015virtual}:
\begin{proposition}
\label{prp:mora}
Let $u \in H^1_0(\Omega) \cap H^{s+1}(\Omega)$ with $0 \leq s \leq k$. Under the mesh assumptions $\mathbf{A1}$, $\mathbf{A2}$, $\mathbf{A3}$
on the decomposition $\mathcal{T}_h$, there exists $u_I \in W_h$  such that
\begin{equation}
\label{eq:interpolation}
\|u - u_I\|_{0} + h \, | u - u_I |_{1} \leq C \,h^{s+1} \, |u|_{s+1}
\end{equation}
where $C$ is a constant independent of $h$.
\end{proposition}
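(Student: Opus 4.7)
The plan is to construct the interpolant $u_I$ elementwise by prescribing its degrees of freedom from $u$, and then combine a polynomial approximation from Theorem \ref{thm:scott} with a scaling/stability estimate for the VEM interpolation operator on each $W_h^E$.

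First, on every $E \in \mathcal{T}_h$, I would define $u_I|_E \in W_h^E$ as the unique element whose degrees of freedom $\mathbf{D1}$--$\mathbf{D3}$ agree with the corresponding functionals evaluated on $u$. When $s \geq 1$ pointwise values at vertices and edge nodes are admissible, since $u$ is continuous on each $\bar E$; for $s = 0$ (i.e.\ only $u \in H^1_0$) I would replace those pointwise functionals by suitable local averages of Scott--Zhang type, checking that the modified functionals are still unisolvent on $W_h^E$. Because the edge data alone fix the trace of $u_I$ on each edge, the local interpolants glue into a single $u_I \in W_h \subset H^1_0(\Omega)$.

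Next, on each $E$ apply Theorem \ref{thm:scott} to obtain $u_\pi \in \Pk_k(E)$ with
\[
\|u-u_\pi\|_{0,E} + h_E\,|u-u_\pi|_{1,E} \leq C\,h_E^{s+1}\,|u|_{s+1,E}.
\]
Since $\Pk_k(E) \subseteq W_h^E$ and the interpolation operator $u \mapsto u_I$ reproduces polynomials in $\Pk_k(E)$, one has $u_I - u_\pi = (u-u_\pi)_I$ on $E$. The key technical step is a stability estimate for the VEM interpolation operator of the form
\[
\|v_I\|_{0,E} + h_E\,|v_I|_{1,E} \leq C\bigl(\|v\|_{0,E^*} + h_E\,|v|_{1,E^*}\bigr),
\]
where $E^*$ is the (local, finite-overlap) patch entering the Scott--Zhang-type averages. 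This is obtained by a Piola-type scaling to a reference configuration of diameter $O(1)$, where on a finite-dimensional family of shapes (controlled by $\mathbf{A1}$--$\mathbf{A3}$) the DOFs form a stable basis, so the norm equivalence follows from compactness/unisolvence. Applying this to $v = u-u_\pi$ yields
\[
\|u_I-u_\pi\|_{0,E} + h_E\,|u_I - u_\pi|_{1,E} \leq C\,h_E^{s+1}\,|u|_{s+1,E^*}.
\]
A triangle inequality then gives the local bound for $\|u-u_I\|_{0,E}+h_E|u-u_I|_{1,E}$, and summing the squares over $E \in \mathcal{T}_h$ with the finite-overlap property of the patches produces the global estimate \eqref{eq:interpolation}.

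The main obstacle is the stability estimate on $W_h^E$: since elements of $W_h^E$ are not given in closed form, one cannot argue by direct quadrature, and must rely on the shape-regularity assumptions $\mathbf{A2}$--$\mathbf{A3}$ together with a scaling argument and the fact that $\mathbf{D1}$--$\mathbf{D3}$ form a unisolvent set to get a uniform constant. The low-regularity case $s=0$ also requires care, because pointwise DOFs must be replaced by averaged functionals that remain consistent with the same unisolvence result.
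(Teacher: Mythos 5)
First, note that the paper does not actually prove Proposition \ref{prp:mora}: it is quoted as a known result with a pointer to \cite{mora2015virtual}. Your strategy (define $u_I$ by matching the degrees of freedom $\mathbf{D1}$--$\mathbf{D3}$, use polynomial reproduction to reduce to $v=u-u_\pi$, and invoke a uniform stability bound for the DOF-interpolation operator) is a legitimate alternative route that appears in parts of the VEM literature, but as written it has a genuine gap precisely at the step you yourself flag as ``the key technical step''. The asserted stability estimate
\[
\|v_I\|_{0,E} + h_E\,|v_I|_{1,E} \leq C\bigl(\|v\|_{0,E^*} + h_E\,|v|_{1,E^*}\bigr)
\]
cannot be obtained by ``scaling to a reference configuration'' plus ``compactness/unisolvence on a finite-dimensional family of shapes''. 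Under $\mathbf{A1}$--$\mathbf{A3}$ the admissible polygons form a continuum of shapes (with a bounded but varying number of vertices), there is no fixed reference element, and --- crucially --- the space $W_h^E$ itself changes with $E$, since its elements are defined implicitly through a boundary value problem on $E$ and the enhancement constraint in \eqref{eq:localspace}. So there is no single finite-dimensional space on which a norm-equivalence-by-compactness argument can be run; one would have to prove that the equivalence constants depend continuously (or at least uniformly) on the shape, which is exactly the hard content. The known proofs of such DOF-stability bounds go through explicit trace, lifting and inverse estimates on the virtual space, and this is a substantial piece of analysis, not a corollary of unisolvence. A secondary issue is the $s=0$ case: once you replace pointwise DOFs by averaged (Scott--Zhang type) functionals supported on edges or patches, the identity $u_I-u_\pi=(u-u_\pi)_I$ on $E$ needs to be rechecked, because the modified functionals must still be exact on traces of $\Pk_k(E)$ for the polynomial-reproduction step to survive.

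For comparison, the proof referenced by the paper avoids DOF stability altogether: using $\mathbf{A2}$ one sub-triangulates each polygon, takes a Cl\'ement-type interpolant $u_c$ of $u$ on the resulting conforming triangulation (whose boundary edges match those of $E$, so $u_c\in\B_k(\partial E)$), and defines $u_I\in W_h^E$ as the solution of a local elliptic problem with boundary datum $u_c$ and prescribed polynomial Laplacian; the estimate \eqref{eq:interpolation} then follows from the standard Cl\'ement bounds, Theorem \ref{thm:scott}, an energy (minimization) argument for the $H^1$ part and a duality argument for the $L^2$ part. If you want to keep your construction, you must either prove the uniform DOF-stability estimate on $W_h^E$ in detail or switch to this elliptic-lifting construction; as it stands, the central inequality of your argument is assumed rather than established.
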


The crucial observation is that   for all $q_k \in \Pk_k(E)$ and for all $v_h \in V_h^E$, the quantities  $a^E(q_k, \, v_h)$ and $(q_k, \, v_h)_{L^2(E)}$ are \textbf{exactly computable} on the basis of degrees of freedom $\textbf{D}$. 
However, for an arbitrary pair $(u, \,v) \in W_h^E$ the quantities $a^E(u, \, v)$ and $(u, \, v)_{L^2(E)}$ are not computable.  
We now define a computable discrete \textbf{virtual local bilinear forms}
\[
a_h^E(\cdot, \,\cdot) \colon W_h^E \times W_h^E \to \R \qquad m_h^E(\cdot, \,\cdot) \colon W_h^E \times W_h^E
\]
approximating the continuous form $a^E(\cdot, \, \cdot)$ and $(\cdot, \, \cdot)_{L^2(E)}$, in the sense that the following properties are satisfied:
\begin{itemize}
\item $\mathbf{k}$\textbf{-consistency}: for all $q_k \in \Pk_k(E)$ and $v_h \in W_h^E$
\[
a_h^E(q_k, \, v_h) = a^E( q_k, \, v_h), \qquad m_h^E(q_k, \, v_h) = ( q_k, \, v_h)_{L^2(E)}
\]
\item \textbf{stability}:  there exist  positive constants $\alpha_*$, $\alpha^*$ and $\beta_*$, $\beta^*$, independent of $h$ and $E$, such that, for all $v_h \in W_h^E$, it holds
\begin{gather}
\label{eq:stabilitygrad}
\alpha_*\, a^E(v_h, \, v_h) \leq a_h^E(v_h, \, v_h) \leq \alpha^*\, a^E(v_h, \, v_h) \\
\beta_*\, (v_h, \, v_h)_{L^2(E)} \leq m_h^E(v_h, \, v_h) \leq \beta^*\, (v_h, \, v_h)_{L^2(E)} .
\end{gather}
\end{itemize}
Following the VEM framework, we can set
\begin{gather}
\label{eq:a_hlocalform}
a_h^E(u_h, \, v_h) := a^E \left({\Pi}_{k}^{\nabla,E} u_h, \,{\Pi}_{k}^{\nabla,E} v_h \right) + \mathcal{S}^E \left((I -{\Pi}_{k}^{\nabla,E}) u_h, \,(I -{\Pi}_{k}^{\nabla,E}) v_h \right) \\
\label{eq:m_hlocalform}
m_h^E(u_h, \, v_h) :=  \left({\Pi}_{k}^{0, E} u_h, \,{\Pi}_{k}^{0, E} v_h \right)_{L^2(E)} + \mathcal{R}^E \left((I -{\Pi}_{k}^{0, E}) u_h, \,(I -{\Pi}_{k}^{0, E}) v_h \right)
\end{gather}
for all $u_h$, $v_h \in W_h^E$, where we have introduced  (symmetric) stabilizing bilinear forms 
\[
\mathcal{S}^E \colon W_h^E \times W_h^E \to \R, \qquad \mathcal{R}^E \colon W_h^E \times W_h^E \to \R
\]
that satisfies 
\[
c_* a^E(v_h, \, v_h) \leq  \mathcal{S}^E(v_h, \, v_h) \leq c^* a^E(v_h, \, v_h) 
\]
for all $v_h \in W_h^E$ such that ${\Pi}_{k}^{\nabla,E} v_h= 0$, and
\[
c_* (v_h, \, v_h)_{L^2(E)} \leq  \mathcal{R}^E(v_h, \, v_h) \leq c^*(v_h, \, v_h)_{L^2(E)} 
\]
for all $v_h \in W_h^E$ such that ${\Pi}_{k}^{0, E} v_h= 0$, for two positive constants $c_*$ and $c^*$.

We define the global approximated bilinear forms $a_h(\cdot, \, \cdot) \colon W_h \times W_h \to \R$ and $m_h(\cdot, \, \cdot) \colon W_h \times W_h \to \R$ by simply summing the local contributions:
\begin{equation}
\label{eq:globalforms}
a_h(u_h, \, v_h) := \sum_{E \in \mathcal{T}_h}  a_h^E(u_h, \, v_h)  \qquad m_h(u_h, \, v_h) := \sum_{E \in \mathcal{T}_h}  m_h^E(u_h, \, v_h)  \qquad \text{for all $u_h$, $v_h \in W_h$.}
\end{equation}
We notice that the symmetry of $a_h$ and $m_h$ and the stability conditions stated before imply the continuity of the bilinear forms, i.e.
\begin{equation}
\label{eq:cont}
a_h(u, \, v) \leq \alpha^* |u|_{1} \, |v|_{1} \qquad m_h(u, \, v) \leq \beta^* \|u\|_{0} \, \|v\|_{0} \qquad \text{for all $u$, $v \in W_h$.}
\end{equation}
Finally we introduce the approximated $H^1$ semi-norm and the approximated $L^2$ norm defined by
\begin{equation}
\label{eq:normah}
|v|_{1,h}^2 := a_h(v, \, v), \qquad  \|v\|_{0,h}^2 := m_h(v, \, v) \qquad \text{for all $v \in W_h$.}
\end{equation}

For the definition of the method we have to construct a \textbf{computable approximation of the right-hand side} $\langle f_h(t), \, v_h \rangle$.  We define the approximated load term $f_h(t)$ for all $t \in (0, T)$ as 
\begin{equation}
\label{eq:right}
f_h(t) := \Pi_{k}^{0,E} f(t) \qquad \text{for all $E \in \mathcal{T}_h$}
\end{equation}
that is is computable directly from degrees of freedom.

The last step in the construction of the virtual method is the definition of suitable discrete initial data. More precisely, we set $u_{h,0}$ (resp. $z_{h,0}$) as the ``interpolant'' of $u_0$ (resp. $z_0$) in $W_h$ by imposing  
\begin{equation}
\label{eq:uh0}
\mathbf{D}(u_{h,0}) = \mathbf{D}(u_0), \qquad \mathbf{D}(z_{h,0}) = \mathbf{D}(z_0).
\end{equation}

\subsection{Virtual semi-discrete problems}
\label{sub:3.2}

We are now ready to state the proposed semi-discrete problem. Referring to~\eqref{eq:globalforms}, \eqref{eq:right} and \eqref{eq:uh0}, we consider the virtual element problem:
\begin{equation}
\label{eq:hyper vir}
\left \{
\begin{aligned}
& \text{find $u_h \in C^0(0, T; \, W_h)) \cap C^1(0,T; \, W_h)
$, such that} \\
& m_h( u_{h, tt}(t), \, v_h ) + a_h(u_h(t), \, v_h) =  \langle f_h(t), \, v_h \rangle \quad  \text{for all $v_h \in W_h$,  for a.e. $t$  in $(0,T)$} \\
& u_h(0) = u_{h, 0}, \qquad u_{h,t}(0) = z_{h,0}.
\end{aligned}
\right.
\end{equation} 

Let $Ndof$ the number of degrees of freedom of the problem, and let us observe that, for the symmetry and the stability conditions of the bilinear forms $a_h$ and $m_h$, there exist 
\[
0 <\lambda_h^{(1)} \leq \dots\leq \lambda_h^{(Ndof)}
\]
and $\set{w_h^{(n)}}_{1, \dots, Ndof}$ orthonormal basis of $W_h$ with respect to $m_h(\cdot, \cdot)$, such that
\begin{equation}
\label{eq:eigenvalue}
a_h(w_h^{(n)}, v_h) = \lambda_h^{(n)} m_h(w_h^{(n)}, v_h) \qquad \text{for all $v_h \in W_h$, for $n=1, \dots, Ndof$.}
\end{equation}
Let $\mu_h^{(n)} := \sqrt{ \lambda_h^{(n)}}$. With this notation we can state the following theorem.
\begin{theorem}
\label{esistenza}
Problem \eqref{eq:hyper vir} has a unique solution, given by
\begin{multline}
\label{soluzione}
u_h(t) := \sum_{n=1}^{Ndof} \left(
m_h(u_{h,0}, \, w_h^{(n)}) \, \cos (\mu_h^{(n)} t)  + \frac{1}{\mu_h^{(n)} } m_h(z_{h,0}, \, w_h^{(n)}) \, \sin (\mu_h^{(n)} t)  + \right. \\  
\left .  + \frac{1}{\mu_h^{(n)} } \, \int_0^t \langle f_h(s), \, w_h^{(n)} \rangle \,  \sin (\mu_h^{(n)} (t-s))  \, {\rm d}s \right
)w_h^{(n)}.
\end{multline} 
Moreover it holds that
\[
\left( a_h(u_h(t), \, u_h(t)) + \|u_{h,t}(t)\|_{h,0}^2 \right)^{\frac{1}{2}} \leq \left( a_h(u_{h, 0}, \, u_{h, 0}) + \|z_{h, 0}\|_{h, 0} \right)^{\frac{1}{2}} + |f_h|_{L^1(0, t, L^2(\Omega))} 
\]
for all $t \in (0, T)$.
\end{theorem}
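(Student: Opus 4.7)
The plan is to exploit the fact that $W_h$ is finite-dimensional and that the generalized eigenvalue problem (3.12) produces a basis $\{w_h^{(n)}\}$ that is orthonormal for $m_h$ and $a_h$-orthogonal. This turns the coupled second-order ODE system \eqref{eq:hyper vir} into a collection of decoupled scalar second-order ODEs, which can then be solved in closed form.

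First I would write the unknown in the form $u_h(t) = \sum_{n=1}^{Ndof} \alpha_n(t)\, w_h^{(n)}$ and test \eqref{eq:hyper vir} against $v_h = w_h^{(m)}$. Using the $m_h$-orthonormality and the eigenvalue identity \eqref{eq:eigenvalue}, each component satisfies the scalar ODE
\[
\alpha_m''(t) + \lambda_h^{(m)}\, \alpha_m(t) = \langle f_h(t),\, w_h^{(m)}\rangle,
\]
with initial data obtained by testing the initial conditions against $w_h^{(m)}$, namely $\alpha_m(0) = m_h(u_{h,0}, w_h^{(m)})$ and $\alpha_m'(0) = m_h(z_{h,0}, w_h^{(m)})$. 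Since $\lambda_h^{(m)} > 0$, the solution is given by Duhamel's formula
\[
\alpha_m(t) = \alpha_m(0)\cos(\mu_h^{(m)} t) + \frac{\alpha_m'(0)}{\mu_h^{(m)}}\sin(\mu_h^{(m)} t) + \frac{1}{\mu_h^{(m)}}\int_0^t \langle f_h(s), w_h^{(m)}\rangle \sin(\mu_h^{(m)}(t-s))\,{\rm d}s,
\]
which, summed over $m$, yields exactly formula \eqref{soluzione}. Uniqueness is immediate since the eigenbasis spans $W_h$ and each scalar ODE with homogeneous data and zero source admits only the trivial solution; regularity in time of $u_h$ follows from the regularity of $f_h(t)$ through the Duhamel integral.

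For the energy estimate I would proceed as in the continuous case: choose $v_h = u_{h,t}(t)$ in \eqref{eq:hyper vir} to obtain the energy identity
\[
\frac{1}{2}\frac{d}{dt}\Bigl(a_h(u_h(t),u_h(t)) + \|u_{h,t}(t)\|_{0,h}^2\Bigr) = \langle f_h(t), u_{h,t}(t)\rangle,
\]
then integrate on $(0,t)$ and bound the right-hand side using Cauchy-Schwarz. Setting $\eta(t):=\bigl(a_h(u_h(t),u_h(t)) + \|u_{h,t}(t)\|_{0,h}^2\bigr)^{1/2}$, this gives $\eta^2(t) \le \eta^2(0) + 2\int_0^t \|f_h(s)\|_0\,\eta(s)\,{\rm d}s$, and applying the standard Gronwall-type lemma for quantities of the form $\eta^2 \leq A + \int B\eta$ produces the required bound $\eta(t) \le \eta(0) + |f_h|_{L^1(0,t;L^2(\Omega))}$.

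The existence/uniqueness part is essentially bookkeeping since $W_h$ is finite-dimensional. The only technical point to watch is the mild mismatch between $\|\cdot\|_{0}$ (appearing naturally when estimating $\langle f_h, u_{h,t}\rangle$) and $\|\cdot\|_{0,h}$ (appearing in the target inequality); the stability estimate \eqref{eq:stabilitygrad} controls one by the other, but to obtain the sharp-looking estimate of the statement one should absorb the constant either by a Gronwall-type argument applied to $\eta$ rather than $\eta^2$, or by using the fact that $f_h \in L^2(\Omega)$ pairs with $u_{h,t}$ via the $L^2$ product which is in turn controlled by the discrete norm. This is the only delicate point of the proof.
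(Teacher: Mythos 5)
Your proposal is correct and follows exactly the route the paper intends: the paper in fact states Theorem \ref{esistenza} without proof, having set up the generalized eigenpairs \eqref{eq:eigenvalue} immediately beforehand precisely so that \eqref{soluzione} is the Duhamel solution of the decoupled scalar oscillator equations in the $m_h$-orthonormal basis, and the energy bound is the discrete analogue of the continuous estimate quoted in Section \ref{sec:2}. The norm mismatch you flag at the end (a factor $\beta_*^{-1/2}$ from comparing $\|u_{h,t}\|_0$ with $\|u_{h,t}\|_{0,h}$ when bounding $\langle f_h, u_{h,t}\rangle$) is a genuine observation, but it is an imprecision already present in the theorem's statement rather than a defect of your argument.
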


\subsection{Error analysis for the semi-discrete problems}
\label{sub:3.3}

In the present section we develop an error analysis for the method for the semi-discrete problems. We introduce the energy projection $\mathcal{P}^{\nabla} \colon H^1_0(\Omega) \to W_h$ defined by
\begin{equation}
\label{proiezioneenergia}
\left \{
\begin{aligned}
& \text{find $\mathcal{P}^{\nabla}  u \in W_h$ such that} \\
& a_h(\mathcal{P}^{\nabla}  u, \, v_h) = a(u, \, v_h) \qquad \text{for all $v_h \in W_h$} 
\end{aligned}
\right .
\end{equation}
and the $L^2$-projection $\mathcal{P}^{0} \colon L^2(\Omega) \to W_h$ defined by
\begin{equation}
\label{proiezionel2}
\left \{
\begin{aligned}
& \text{find $\mathcal{P}^{0}  u \in W_h$ such that} \\
& m_h(\mathcal{P}^{0}  u, \, v_h) = (u, \, v_h)_{L^2(\Omega)} \qquad \text{for all $v_h \in W_h$.} 
\end{aligned}
\right .
\end{equation} 
The following approximation results hold (see \cite{vaccabeirao} for the proof)
\begin{lemma}
\label{lemma:stimeenergia}
Let $u \in H^1_0(\Omega) \cap H^{k+1}(\Omega)$.  Then there exists a unique function $\mathcal{P}^{\nabla} u \in W_h$ verifying 
\begin{equation}
\label{eq:energiah1}
|\mathcal{P}^{\nabla} u - u|_1 \leq C \, h^k \,  |u|_{k+1}. 
\end{equation}
Moreover, if the domain  $\Omega$ is convex, the following bound holds
\begin{equation}
\label{eq:energial2}
\|\mathcal{P}^{\nabla} u - u\|_0 \leq C \,  h^{k+1} \, |u|_{k+1}, 
\end{equation}
where   $C$ is an $h$-independent constant (depending only on $\alpha_*$ and $\alpha^*$).
\end{lemma}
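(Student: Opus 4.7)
The plan is to follow the standard Virtual Element template: existence/uniqueness by Lax--Milgram, the $H^1$ bound by comparison with the VEM interpolant and a piecewise polynomial approximant, and the $L^2$ bound by an Aubin--Nitsche duality argument together with elliptic regularity.

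For existence and uniqueness of $\mathcal{P}^{\nabla} u$, I would observe that, by summing the lower stability bound in \eqref{eq:stabilitygrad} over the elements and using the coercivity of $a(\cdot,\cdot)$ on $H^1_0(\Omega)$, the discrete form $a_h$ is coercive and continuous on $W_h\subset H^1_0(\Omega)$. Since $v_h\mapsto a(u,v_h)$ is a bounded linear functional on $W_h$, Lax--Milgram gives a unique $\mathcal{P}^{\nabla} u \in W_h$.

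For \eqref{eq:energiah1}, set $\eta := \mathcal{P}^{\nabla} u - u_I$ where $u_I\in W_h$ is the interpolant from Proposition~\ref{prp:mora}, and let $u_\pi$ be the piecewise polynomial approximant obtained by applying Theorem~\ref{thm:scott} element by element with $s=k$. By coercivity and the definition of $\mathcal{P}^{\nabla}$,
\[
\alpha_*\,|\eta|_1^2 \le a_h(\eta,\eta) = a(u,\eta) - a_h(u_I,\eta).
\]
Adding and subtracting $u_\pi$ in each entry, and exploiting element-wise $k$-consistency (which gives $a_h^E(u_\pi,\eta) = a^E(u_\pi,\eta)$), the right-hand side rearranges into $a(u-u_\pi,\eta) + a_h(u_\pi - u_I,\eta)$. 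Continuity of $a$ and $a_h$, the triangle inequality $|u_\pi-u_I|_1\le |u-u_\pi|_1 + |u-u_I|_1$, and the bounds \eqref{eq:scottpre} and \eqref{eq:interpolation} with $s=k$ yield $|\eta|_1 \le C h^k|u|_{k+1}$. One more triangle inequality with $u-u_I$ proves \eqref{eq:energiah1}.

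For the $L^2$ estimate \eqref{eq:energial2}, I would use duality. Let $e := \mathcal{P}^{\nabla} u - u$ and let $w \in H^1_0(\Omega)$ solve the adjoint problem $-\Delta w = e$. Convexity of $\Omega$ gives $w \in H^2(\Omega)$ with $|w|_2 \le C\|e\|_0$. Testing against $e$ and inserting the interpolant $w_I \in W_h$ of Proposition~\ref{prp:mora},
\[
\|e\|_0^2 = a(e,w) = a(e,\,w-w_I) + a(e,w_I).
\]
The first summand is controlled by $|e|_1\,|w-w_I|_1 \le C h^k|u|_{k+1}\cdot C h|w|_2 \le C h^{k+1}|u|_{k+1}\|e\|_0$ thanks to \eqref{eq:energiah1} and \eqref{eq:interpolation}. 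For the second summand, use $a(u,w_I)=a_h(\mathcal{P}^{\nabla} u,w_I)$ to write
\[
a(e,w_I) = a(\mathcal{P}^{\nabla} u,w_I) - a_h(\mathcal{P}^{\nabla} u,w_I) = \sum_{E\in\mathcal{T}_h}\bigl[a^E(\mathcal{P}^{\nabla} u - u_\pi,\,w_I - w_\pi) - a_h^E(\mathcal{P}^{\nabla} u - u_\pi,\,w_I - w_\pi)\bigr],
\]
where the $k$-consistency of $a_h^E$ in either slot (together with symmetry) lets us subtract the local Scott--Dupont polynomials $u_\pi$ and $w_\pi$ without changing the expression. Continuity and \eqref{eq:stabilitygrad} bound each local contribution by $(1+\alpha^*)|\mathcal{P}^{\nabla} u-u_\pi|_{1,E}|w_I-w_\pi|_{1,E}$; summing, using triangle inequalities, and invoking \eqref{eq:energiah1}, \eqref{eq:interpolation}, \eqref{eq:scottpre} delivers $|a(e,w_I)| \le C h^{k+1}|u|_{k+1}\|e\|_0$. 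Dividing by $\|e\|_0$ concludes the proof.

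The main obstacle is the second paragraph of the duality estimate: because $a_h$ is only an approximation to $a$, the natural Galerkin orthogonality fails, and one must carefully insert two separate polynomial approximants (one in each argument) and exploit $k$-consistency twice to convert the inconsistency $a-a_h$ into a product of two $O(h)$ approximation errors. Everything else is bookkeeping with the stability constants and the standard estimates already recalled in the excerpt.
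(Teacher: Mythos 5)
Your proof is correct and follows essentially the same route as the paper, which does not reproduce the argument itself but defers it to \cite{vaccabeirao}: existence via Lax--Milgram, the $H^1$ bound by inserting the interpolant $u_I$ and the piecewise polynomial $u_\pi$ and invoking stability, $k$-consistency and the estimates \eqref{eq:scottpre}--\eqref{eq:interpolation} (exactly the template used in the paper's own proof of Lemma \ref{lemma:stimal2} for $\mathcal{P}^0$), and the $L^2$ bound by Aubin--Nitsche duality with the double insertion of $u_\pi$ and $w_\pi$ to handle the consistency error $a-a_h$. No gaps; the handling of the non-orthogonality of $a_h$ in the duality step is the key point and you treat it correctly.
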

For the $L^2$-projection we have the following lemma
\begin{lemma}
\label{lemma:stimal2}
Let $u \in H^{k+1}(\Omega)$.  Then there exists a unique function $\mathcal{P}^{0} u \in W_h$ verifying 
\begin{equation}
\label{eq:l2l2}
\|\mathcal{P}^{0} u - u\|_0 \leq C \, h^{k+1} \,  |u|_{k+1}, 
\end{equation}
where  $C$ is an $h$-independent constant (depending only on $\beta_*$ and $\beta^*$).
\end{lemma}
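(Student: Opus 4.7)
The plan is to follow the standard VEM/Galerkin argument: establish well-posedness on the finite-dimensional space $W_h$ from the stability of $m_h$, then bound $\mathcal{P}^{0}u - u$ by inserting the Virtual Element interpolant $u_I$ (Proposition \ref{prp:mora}) and the piecewise Scott--Dupont polynomial $u_\pi$ (Theorem \ref{thm:scott}), exploiting the $k$-consistency of $m_h$ on polynomials.

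First I would settle existence and uniqueness. The bilinear form $m_h$ is symmetric and, by the lower bound in the stability estimate, coercive on $W_h$ with constant $\beta_*$; since $W_h$ is finite dimensional and $v_h \mapsto (u, v_h)_{L^2(\Omega)}$ is linear, Lax--Milgram (or plain linear algebra) yields a unique $\mathcal{P}^0 u \in W_h$ satisfying \eqref{proiezionel2}.

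For the error bound I would set $\delta_h := \mathcal{P}^0 u - u_I \in W_h$, where $u_I$ is the interpolant of Proposition \ref{prp:mora}. Using the definition of $\mathcal{P}^0 u$ and adding and subtracting $(u_I,\delta_h)_{L^2(\Omega)}$,
\[
\beta_* \|\delta_h\|_0^2 \le m_h(\delta_h,\delta_h) = (u - u_I, \delta_h)_{L^2(\Omega)} + \bigl[(u_I, \delta_h)_{L^2(\Omega)} - m_h(u_I, \delta_h)\bigr].
\]
The first term is immediately controlled by $C h^{k+1} |u|_{k+1} \|\delta_h\|_0$ thanks to \eqref{eq:interpolation}. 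For the second, I would work element by element, inserting the local Scott--Dupont polynomial $u_\pi \in \Pk_k(E)$ and using the $k$-consistency $m_h^E(u_\pi,\delta_h) = (u_\pi,\delta_h)_{L^2(E)}$:
\[
(u_I,\delta_h)_{L^2(E)} - m_h^E(u_I,\delta_h) = (u_I - u_\pi,\delta_h)_{L^2(E)} - m_h^E(u_I - u_\pi,\delta_h).
\]
The continuity of $(\cdot,\cdot)_{L^2(E)}$ and the upper stability of $m_h^E$ (with constant $\beta^*$) bound each piece by $(1+\beta^*)\|u_I - u_\pi\|_{0,E}\|\delta_h\|_{0,E}$, and the triangle inequality together with \eqref{eq:scottpre} and \eqref{eq:interpolation} gives $\|u_I - u_\pi\|_{0,E} \le C h_E^{k+1} |u|_{k+1,E}$.

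Summing over $E \in \mathcal{T}_h$ and applying Cauchy--Schwarz in the element index produces
\[
\beta_* \|\delta_h\|_0^2 \le C h^{k+1} |u|_{k+1} \|\delta_h\|_0,
\]
so that $\|\delta_h\|_0 \le C h^{k+1}|u|_{k+1}$ with $C$ depending only on $\beta_*,\beta^*$. A final triangle inequality $\|\mathcal{P}^0 u - u\|_0 \le \|\delta_h\|_0 + \|u_I - u\|_0$ closes the proof. I expect the only delicate point to be keeping track that the constant at the end depends only on $\beta_*$ and $\beta^*$ (as claimed in the statement); this is automatic from the computation above since the mesh-regularity constants from Scott--Dupont and Proposition \ref{prp:mora} are absorbed in the generic $C$, and no use of $a_h$ is made.
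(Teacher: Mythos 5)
Your proposal is correct and follows essentially the same route as the paper's own proof: the same decomposition via $\delta_h = \mathcal{P}^0 u - u_I$, coercivity of $m_h$, insertion of the Scott--Dupont polynomial $u_\pi$ elementwise, $k$-consistency, and the stability/continuity of $m_h^E$, ending with the same two residual terms $(u-u_\pi,\delta_h)_{L^2(E)}$ and $m_h^E(u_\pi-u_I,\delta_h)$. The only difference is cosmetic bookkeeping in how the terms are added and subtracted.
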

\begin{proof}
For the existence and uniqueness of $\mathcal{P}^0 u$  it is sufficient to observe that $\mathcal{P}^0 u$ is the solution of the variational problem \eqref{proiezionel2}. Since the bilinear form $m_h(\cdot, \, \cdot)$ is continuous and coercive and the functional  $(u, \, \cdot)_{L^2(\Omega)}$ is continuous on $W_h$, the previous problem has a unique solution.
Now, let $u_I$ the interpolant function of $u$ in the virtual space $W_h$ (see Proposition \ref{prp:mora}) and let $u_{\pi}$ the piecewise polynomial approximation of $u$ (see Theorem \ref{thm:scott}).   Let us set $\delta_h := \mathcal{P}^0 u - u_I$. Recalling the stability and consistency properties in Section \ref{sub:3.1}, some simple algebra yields 
\[
\begin{split}
\beta_*\|\delta_h\|^2_0 &= \beta_* (\delta_h, \, \delta_h)_{L^2(\Omega)} \leq m_h(\delta_h, \, \delta_h) = m_h(\mathcal{P}^0 u, \, \delta_h) - m_h(u_I, \,\delta_h) \\
&= m_h(\mathcal{P}^0 u, \, \delta_h) - \sum_{E \in \mathcal{T}_h} m_h^E(u_I, \, \delta_h) \\
&= (u, \, \delta_h)_{L^2(\Omega)} - \sum_{E \in \mathcal{T}_h} \left(m_h^E(u_I - u_{\pi}, \, \delta_h) + m_h^E(u_{\pi}, \, \delta_h)\right) \\
&= (u, \, \delta_h)_{L^2(\Omega)} - \sum_{E \in \mathcal{T}_h} \left(m_h^E(u_I - u_{\pi}, \, \delta_h) + (u_{\pi}, \, \delta_h)_{L^2(E)}\right) \\
&= (u, \, \delta_h)_{L^2(\Omega)} - \sum_{E \in \mathcal{T}_h} \left(m_h^E(u_I - u_{\pi}, \, \delta_h) + (u_{\pi} - u, \, \delta_h)_{L^2(E)}+ ( u, \, \delta_h)_{L^2(E)}\right) \\
&= (u, \, \delta_h)_{L^2(\Omega)} - \sum_{E \in \mathcal{T}_h} \left(m_h^E(u_I - u_{\pi}, \, \delta_h) + (u_{\pi} - u, \, \delta_h)_{L^2(E)} \right)- ( u, \, \delta_h)_{L^2(\Omega)} \\
&=  \sum_{E \in \mathcal{T}_h} \left(m_h^E( u_{\pi} - u_I, \, \delta_h) + (u - u_{\pi}, \, \delta_h)_{L^2(E)}\right).
\end{split}
\]
Therefore
\[
\beta_* \, \|\delta_h\|^2_0 \leq \beta^* \, \|u_{\pi} - u_I\|_{0} \, \|\delta_h\|_0 + \|u - u_{\pi} \|_{0} \, \|\delta_h\|_0,
\]
and thus 
\[
\begin{split}
\|\mathcal{P}^0 u - u\|_0 &\leq \|\mathcal{P}^0 u - u_I\|_0 +\|u_I - u\|_0 = \|\delta_h\|_0 + \|u_I - u\|_0 \leq C \, \left( \|u_I - u\|_0 + \|u_{\pi} - u\|_{0}\right).
\end{split}
\]
By  bounds \eqref{eq:interpolation} and \eqref{eq:scottpre} we can conclude that
\[
\|\mathcal{P}^0 u - u\|_0 \leq C \, h^{k+1} \, |u|_{k+1}.
\]
\end{proof}
The previous lemma allows us to derive the following error estimate. 

\begin{theorem}
\label{thm:first}
Let $u$ be the solution of  problem \eqref{eq:hyper var} and let us assume that $u \in C^2(0, T; \, H_0^1)$ and that $u_0$, $z_0$, $u_t$, $u_{tt}$ and $f(t)$ are in $H^{k+1}(\Omega)$. 
Let $u_h$ be the solution of  problem \eqref{eq:hyper vir}, then for all $t \in (0, T)$ it holds that
\begin{multline}
\label{eq:first}
|u_h(t) - u(t)|_1 + \|u_{h,t}(t) - u_t(t)\|_{0}  \leq C \left( |u_{h, 0} -  u_{ 0} |_1 + \|z_{h, 0} - z_0 \|_{0}  \right) +  \\
+ C \, h^k \left( |u_0|_{k+1} + |u_t(t)|_{k+1}   + h \, |z_0|_{k+1} + h\, |u_{tt}(t)|_{k+1} + h \, |f(t)|_{L^1(0,t,\, L^2(\Omega))}    \right)
\end{multline}
\end{theorem}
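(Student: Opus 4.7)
The proof follows the classical energy technique for semi-discrete second-order evolution problems, with the VEM-specific twist that the variational crime introduced by $a_h\neq a$ and $m_h\neq(\cdot,\cdot)$ is absorbed by the two Ritz-type projections $\mathcal{P}^\nabla$ and $\mathcal{P}^0$ defined in~\eqref{proiezioneenergia}--\eqref{proiezionel2}.

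The first step is the error splitting $u_h - u = \xi - \eta$ with $\eta := u - \mathcal{P}^\nabla u$ and $\xi := u_h - \mathcal{P}^\nabla u \in W_h$. The $\eta$-contribution is controlled directly by Lemma~\ref{lemma:stimeenergia}: $|\eta(t)|_1 \leq C h^k |u(t)|_{k+1}$, and, since $\mathcal{P}^\nabla$ commutes with time differentiation, $\|\eta_t(t)\|_0 \leq C h^{k+1} |u_t(t)|_{k+1}$ by the convex-domain bound~\eqref{eq:energial2}. The term $|u_t(t)|_{k+1}$ that appears on the right-hand side of~\eqref{eq:first} enters from this estimate.

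The second step is to derive the error equation for $\xi$ by subtracting~\eqref{eq:hyper var} from~\eqref{eq:hyper vir}, both tested against a common $v_h \in W_h$. Using the defining property $a_h(\mathcal{P}^\nabla u, v_h) = a(u, v_h)$ and inserting $\pm\,\mathcal{P}^0 u_{tt}$ so that $m_h(\mathcal{P}^0 u_{tt},v_h)=(u_{tt},v_h)$ can be invoked, one rearranges to
\[
m_h(\xi_{tt}, v_h) + a_h(\xi, v_h) = \langle f_h - f, v_h \rangle + m_h\bigl(\mathcal{P}^0 u_{tt} - \mathcal{P}^\nabla u_{tt},\, v_h\bigr).
\]
Testing with $v_h = \xi_t$ gives $\tfrac{1}{2}\frac{d}{dt}(\|\xi_t\|_{0,h}^2 + a_h(\xi,\xi)) \leq R(t)\,\|\xi_t\|_{0,h}$ with $R(t):=\|f_h-f\|_0+\beta^*\,\|\mathcal{P}^0 u_{tt}-\mathcal{P}^\nabla u_{tt}\|_0$. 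Setting $E(t):=\|\xi_t\|_{0,h}^2+a_h(\xi,\xi)$ and using $\|\xi_t\|_{0,h}\leq\sqrt{E(t)}$ yields $\frac{d}{dt}\sqrt{E(t)}\leq R(t)$, hence $\sqrt{E(t)}\leq\sqrt{E(0)}+\int_0^t R(s)\,ds$. The initial contribution $\sqrt{E(0)}$ is bounded by decomposing $\xi(0)=(u_{h,0}-u_0)-(\mathcal{P}^\nabla u_0 - u_0)$ (analogously for $\xi_t(0)$), invoking~\eqref{eq:energiah1} on $u_0$ and $z_0$: this produces the $|u_{h,0}-u_0|_1$, $\|z_{h,0}-z_0\|_0$, $|u_0|_{k+1}$ and $|z_0|_{k+1}$ terms. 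Passing from discrete to continuous norms via the stability bound~\eqref{eq:stabilitygrad} and adding back the $\eta$-bounds closes the argument.

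The main obstacle is the treatment of the consistency-type residual $\mathcal{P}^0 u_{tt}-\mathcal{P}^\nabla u_{tt}$: neither projection acts as the identity on $W_h$, so the residual cannot be eliminated, and one must use the coupled bounds of Lemmas~\ref{lemma:stimeenergia} and~\ref{lemma:stimal2} applied to $u_{tt}$ (via the triangle inequality through $u_{tt}$ itself) to see that this residual is of the superconvergent order $h^{k+1}$ in $L^2$, which is precisely what is needed to match the $h\,|u_{tt}(t)|_{k+1}$ contribution in~\eqref{eq:first}. A similar elementwise $L^2$-projection bound controls $\|f_h-f\|_0$ and, once integrated in time, yields the $L^1$-in-time source contribution of $f$.
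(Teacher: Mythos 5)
Your argument is correct and follows essentially the same route as the paper: the identical splitting $u_h-u=(u_h-\mathcal{P}^{\nabla}u)+(\mathcal{P}^{\nabla}u-u)$, the same error equation for the discrete part, and the same $h^{k+1}$ bounds for the source term and the mass-consistency residual, with the initial-data terms handled identically. The only cosmetic differences are that the paper invokes the a priori stability estimate of Theorem \ref{esistenza} for the equation satisfied by $\theta=u_h-\mathcal{P}^{\nabla}u$ instead of re-deriving the Gronwall-type bound by testing with $\xi_t$, and it estimates the residual $(u_{tt},v_h)_{L^2(\Omega)}-m_h(\mathcal{P}^{\nabla}u_{tt},v_h)$ by inserting the local polynomial projections $\Pi^{0,E}_k u_{tt}$ and exploiting the $k$-consistency of $m_h^E$, rather than routing through the global discrete projection $\mathcal{P}^0$ and Lemma \ref{lemma:stimal2}; both devices deliver the same order and both (like the paper's own proof) implicitly rely on the convexity-dependent bound \eqref{eq:energial2}.
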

\begin{proof}
Let us set
\begin{equation}
\label{eq:thetarho}
u_h(t) - u(t)   = \left( u_h(t) - \mathcal{P}^{\nabla} u(t) \right) + \left(\mathcal{P}^{\nabla} u( t) -  u( t) \right)  =: \theta(t) + \rho( t),
\end{equation}
which are then estimated separately. The term $\rho(t)$ is the error generated by the energy projection. Using Lemma \ref{lemma:stimeenergia}, for all $t \in (0,T)$ we easily have
\begin{equation}
\label{eq:rhoh1}
\begin{split}
|\rho(t)|_1 &= |\mathcal{P}^{\nabla} u( t) - u( t)|_1 \leq C \, h^{k} \, |u(t)|_{k+1}\\
& \leq C \, h^{k} \, \left( |u_0|_{k+1} + \int_0^t |u_t(s)|_{k+1}\, {\rm d}x\right) =
C \, h^{k} \, \left( |u_0|_{k+1} +  |u_t|_{L^1(0,t,H^{k+1}(\Omega))}\right)
\end{split}
\end{equation}
in the same way
\begin{equation}
\label{eq:rhol2}
\|\rho_t(t)\|_0 = C \, h^{k+1} \, \left( |z_0|_{k+1} +  |u_{tt}|_{L^1(0,t,H^{k+1}(\Omega))}\right).
\end{equation}
In order to bound the term $\theta( t)$, we observe that, by \eqref{eq:hyper var}, definition \eqref{proiezioneenergia} and using that the derivative with respect to time commutes with the energy projection, for all $v_h \in W_h$ and for all $t \in (0,T)$ it holds 
\begin{equation}
\label{eq:theta}
\begin{split}
m_h(\theta_{tt}(t), \, v_h) &+ a_h(\theta( t), \, v_h) = \langle f_h(t), \, v_h \rangle - m_h\left( \frac{d^2}{dt^2} \mathcal{P}^{\nabla} u (t), \, v_h\right) - a_h(\mathcal{P}^{\nabla} u(t), \, v_h)  \\
& =  \langle f_h(t), \, v_h\rangle - m_h(\mathcal{P}^{\nabla} u_{tt}(t), \, v_h) - a(u(t),  \, v_h)   \\
& =  \langle f_h(t), \, v_h \rangle - \langle f(t), \, v_h\rangle + (u_{tt}( t), \, v_h)_{L^2(\Omega)} - m_h(\mathcal{P}^{\nabla} u_{tt}(t), \, v_h)     \\
& =  \langle f_h(t) - f(t), \, v_h \rangle + \left( (u_{tt}(t), \, v_h)_{L^2(\Omega)} - m_h(\mathcal{P}^{\nabla} u_{tt}(t), \, v_h) \right) \\
& =: \langle \phi(t), \, v_h \rangle + \langle \eta(t), \, v_h \rangle.
\end{split}
\end{equation}
Then the function $\theta$ solves the problem
\begin{equation}
\label{eq:thetahyp*}
\left \{
\begin{aligned}
& \text{find $\theta \in C^0(0, T; \, W_h)) \cap C^1(0,T; \, W_h)
$, such that} \\
& m_h( \theta_{tt}(t), \, v_h ) + a_h( \theta(t), \, v_h) =  \langle \phi(t) + \eta(t), \, v_h \rangle \quad \text{for all $v_h \in W_h$,  for a.e. $t$  in $(0,T)$} \\
& \theta(0) = u_{h, 0} - \mathcal{P}^{\nabla} u_0, \qquad \theta_t(0) = z_{h,0} - \mathcal{P}^{\nabla} z_0
\end{aligned}
\right.
\end{equation}
and, for Theorem \ref{esistenza}, it holds that
\begin{multline}
\label{eq:estimatetheta}
\left( a_h(\theta(t), \, \theta(t)) + \|\theta_t(t)\|_{h,0}^2 \right)^{\frac{1}{2}} \leq \left( a_h(\theta(0), \, \theta(0)) + \|\theta_t(0)\|^2_{h, 0} \right)^{\frac{1}{2}} +\\
+ |\phi |_{L^1(0,t,\, L^2(\Omega))} + |\eta|_{L^1(0,t,\, L^2(\Omega))}. 
\end{multline}
We can observe that the term $\phi$ can be bounded as follows
\begin{equation}
\label{eq:I+}
\begin{split}
\langle \phi(t), \, v_h \rangle & = \langle f_h( t) - f(t), \, v_h \rangle = \sum_{E \in \mathcal{T}_h} \int_E (\Pi^{0, E}_k f(t) - f( t) ) \, v_h \, {\rm d}x \\
& \leq  \sum_{E \in \mathcal{T}_h} C \, h^{k+1} \, |f(t)|_{k+1, E} \, \|v_h\|_{0, E} = C \, h^{k+1} \, |f(t)|_{k+1} \, \|v_h\|_{0}.
\end{split}
\end{equation}
For the term $\eta$, using the consistency and stability properties of the bilinear form $m_h(\cdot, \cdot)$, we can obtain
\begin{equation}
\label{eq:II+}
\begin{split}
\langle \eta(t), \, v_h \rangle &= (u_{tt}( t), \, v_h)_{L^2(\Omega)} - m_h(\mathcal{P}^{\nabla} u_{tt}(t), \, v_h)   = \\
&= \sum_{E \in \mathcal{T}_h} \biggl(  (u_{tt}(t), \, v_h)_{L^2(E)} - m_h^E(\mathcal{P}^{\nabla} u_{tt}(t), \, v_h)  \biggr) \\
&= \sum_{E \in \mathcal{T}_h} \biggl(  (u_{tt}(t) -  \Pi^{0, E}_{k} u_{tt}(t), \, v_h)_{L^2(E)} - m_h^E(\mathcal{P}^{\nabla} u_{tt}(t) - \Pi^{0, E}_{k} u_{tt}(t), \, v_h)   \biggr)\\
&= \sum_{E \in \mathcal{T}_h} \biggl(  (u_{tt}(t) -  \Pi^{0, E}_{k} u_{tt}(t), \, v_h)_{L^2(E)} + m_h^E(\Pi^{0, E}_{k} u_{tt}(t) -\mathcal{P}^{\nabla} u_{tt}(t), \, v_h)   \biggr)\\
& \leq  \sum_{E \in \mathcal{T}_h} C \, \left( \|u_{tt}( t) - \Pi^{0, E}_{k} u_{tt}(t)\|_{0, E} + \|\Pi^{0, E}_{k} u_{tt}( t) - \mathcal{P}^{\nabla} u_{tt}(t)\|_{0, E} \right) \|v_h\|_{0,E}\\
& \leq C \, h^{k+1} \, |u_{tt}( t)|_{k+1} \, \|v_h\|_0.
\end{split}
\end{equation}
For the initial data we simply have
\begin{equation}
\label{eq:indatah1}
\begin{split}
a_h(\theta(0), \, \theta(0)) & \leq \alpha^* \, |\theta(0)|^2_1 = \alpha ^* |u_{h, 0} - \mathcal{P}^{\nabla} u_0|^2_1 \leq C \, \left( |u_{h,0} - u_0|^2_1 +   |u_{0} - \mathcal{P}^{\nabla} u_0|^2_1\right) \\
& \leq C \, \left( |u_{h,0} - u_0|^2_1 + h^{2k} \, |u_0|^2_{k+1} \right) 
\end{split}
\end{equation}
and similarly
\begin{equation}
\label{eq:indatal2}
 \|\theta_t(0)\|_{h, 0}^2 \leq \beta^* \, \|z_{h,0} - \mathcal{P}^{\nabla} z_0\|_0^2  \leq C \, \left( \|z_{h,0} - z_0\|_0^2 + h^{2(k+1)} \, |z_0|^2_{k+1}\right).
\end{equation}
Then, by collecting \eqref{eq:I+}, \eqref{eq:II+}, \eqref{eq:indatah1}, \eqref{eq:indatal2}, in \eqref{eq:estimatetheta}
\begin{multline}
\label{eq:estimatethetabis}
\left( a_h(\theta(t), \, \theta(t)) + \|\theta_t(t)\|_{h,0}^2 \right)^{\frac{1}{2}} \leq C \left( |u_{h,0} - u_0|_1 +  \|z_{h,0} - z_0\|_0  \right)+ \\  
+ C \, \left( h^{k} \, |u_0|_{k+1}  +  h^{k+1} \,|z_0|_{k+1} + h^{k+1} \, |u_{tt}( t)|_{L^1(0, t, H^{k+1}(\Omega))} + h^{k+1} \, |f(t)|_{L^1(0, t, H^{k+1}(\Omega))}  \right).
\end{multline} 
Finally, from \eqref{eq:rhoh1}, \eqref{eq:rhol2} and \eqref{eq:estimatethetabis} we get the thesis.
\end{proof}

\begin{remark}
We observe that from the estimate \eqref{eq:first}, we immediately obtain the $H^1$ semi-norm estimate of the error between the semi-discrete solution and the continuous solution, i.e.
\begin{equation}
\label{eq:estimateh1}
|u_h(t) - u(t)|_1  \leq C \left( |u_{h, 0} -  u_{ 0} |_1  + h^k \, |u_0|_{k+1} + h^k\, |u_t(t)|_{k+1}  + O(h^{k+1})\right). 
\end{equation}
\end{remark}
For the $L^2$ estimate of the error we can state the following theorem.
\begin{theorem}
\label{thm:semidiscretol2}
Under the assumptions of the Theorem \ref{thm:first}, for all $t \in (0,T)$ it holds that
\begin{multline}
\label{eq:stimadiscretol2}
\|u_{h,t}(t) - u_t(t)\|_{0}  \leq C \left( |u_{h, 0} -  u_{ 0} |_1 + \|v_{h, 0} - z_0 \|_{0}  \right) +  \\
+ C \, h^{k+1} \left( |u_0|_{k+1} +  |z_0|_{k+1} +  |u_{tt}|_{L^2(0,t, \, H^{k+1}(\Omega))} +  |f|_{L^2(0,t, \, H^{k+1}(\Omega))}  \right)
\end{multline}
\end{theorem}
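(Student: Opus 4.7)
The plan is to parallel the proof of Theorem~\ref{thm:first} while extracting an extra factor of $h$ at every step via the sharp $L^2$-type projection estimates, and converting the temporal norms from $L^1$ to $L^2$ by Cauchy-Schwarz. I would again split
\[
u_h(t) - u(t) = \theta(t) + \rho(t), \qquad \theta := u_h - \mathcal{P}^\nabla u, \quad \rho := \mathcal{P}^\nabla u - u.
\]
Since time differentiation commutes with $\mathcal{P}^\nabla$, the second estimate in Lemma~\ref{lemma:stimeenergia} yields the superconvergent bound $\|\rho_t(t)\|_0 \leq C h^{k+1} |u_t(t)|_{k+1}$. Writing $u_t(t) = z_0 + \int_0^t u_{tt}(s)\,\mathrm{d}s$ and applying Cauchy-Schwarz in time gives $|u_t(t)|_{k+1} \leq |z_0|_{k+1} + \sqrt{t}\,|u_{tt}|_{L^2(0,t;H^{k+1}(\Omega))}$, accounting for the $|z_0|_{k+1}$ and $L^2$-in-time $|u_{tt}|$ contributions on the right-hand side of~\eqref{eq:stimadiscretol2}.

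For the discrete part $\theta_t$, the error equation~\eqref{eq:thetahyp*} together with the stability estimate of Theorem~\ref{esistenza} gives
\[
\|\theta_t(t)\|_{h,0} \leq \bigl( a_h(\theta(0),\theta(0)) + \|\theta_t(0)\|_{h,0}^2 \bigr)^{1/2} + |\phi|_{L^1(0,t;L^2(\Omega))} + |\eta|_{L^1(0,t;L^2(\Omega))},
\]
and the pointwise estimates~\eqref{eq:I+} and~\eqref{eq:II+} already supply $\|\phi(t)\|_0 + \|\eta(t)\|_0 \leq C h^{k+1}\bigl(|f(t)|_{k+1} + |u_{tt}(t)|_{k+1}\bigr)$. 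Applying Cauchy-Schwarz in time to these two forcing terms delivers the $|f|_{L^2(0,t;H^{k+1})}$ and $|u_{tt}|_{L^2(0,t;H^{k+1})}$ dependence requested by~\eqref{eq:stimadiscretol2}. The initial contribution $\|\theta_t(0)\|_{h,0}$ is handled exactly as in~\eqref{eq:indatal2}, using once more the $L^2$-sharp inequality of Lemma~\ref{lemma:stimeenergia} applied to $z_0$, which produces the $\|z_{h,0}-z_0\|_0 + h^{k+1}|z_0|_{k+1}$ piece.

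The genuinely delicate step is the remaining initial term $a_h(\theta(0),\theta(0))^{1/2}$. A naive triangle inequality followed by the first estimate of Lemma~\ref{lemma:stimeenergia} yields only $C(|u_{h,0}-u_0|_1 + h^k |u_0|_{k+1})$, losing one factor of $h$ in front of $|u_0|_{k+1}$ with respect to the sharp target~\eqref{eq:stimadiscretol2}. To recover the $h^{k+1}|u_0|_{k+1}$ rate, I would redo the decomposition at the initial time using the $L^2$-projection rather than $\mathcal{P}^\nabla$, so that the superconvergent bound $\|u_0 - \mathcal{P}^0 u_0\|_0 \leq C h^{k+1} |u_0|_{k+1}$ of Lemma~\ref{lemma:stimal2}, together with $\|u_{h,0}-u_0\|_0 \leq C h^{k+1}|u_0|_{k+1}$ from Proposition~\ref{prp:mora}, becomes available. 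The extra consistency term $a_h(\mathcal{P}^\nabla u - \mathcal{P}^0 u,\cdot)$ that this substitution introduces in the $\theta$-equation would then be disposed of by integration by parts in time on the interval $(0,t)$, so that the problematic $H^1$-mismatch is traded against the $L^2$-mismatches on which the superconvergent estimates bite; no time derivatives of $f$ or $u_{tt}$ are generated because the pairing is evaluated only at the endpoints and against quantities already bounded in the first paragraph. Combining the refined bound on $\|\theta_t(t)\|_0$ with the estimate of $\|\rho_t(t)\|_0$ through $\|u_{h,t}-u_t\|_0 \leq \|\theta_t\|_0 + \|\rho_t\|_0$ then yields~\eqref{eq:stimadiscretol2}.
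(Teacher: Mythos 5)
Your first two paragraphs are sound but only reproduce what Theorem \ref{thm:first} already gives; the whole theorem hinges on the step you yourself flag as delicate, and there your argument does not close. The repair you propose --- re-decomposing with $\mathcal{P}^0$ and trading the resulting consistency term $a_h\bigl((\mathcal{P}^\nabla-\mathcal{P}^0)u,\cdot\bigr)$ away by integration by parts in time --- cannot recover the missing power of $h$: that term is an $H^1$-seminorm pairing, $(\mathcal{P}^\nabla-\mathcal{P}^0)u$ is only $O(h^k)$ in $\lvert\cdot\rvert_1$ (its superconvergence is an $L^2$ phenomenon), and integrating by parts in time merely moves the time derivative onto $\theta$ while leaving you with products of two $H^1$ quantities, each at best $O(h^k)$ after absorption into the energy. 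So you would still land on $h^k\lvert u_0\rvert_{k+1}$, not $h^{k+1}\lvert u_0\rvert_{k+1}$. The claim that no problematic terms are generated "because the pairing is evaluated only at the endpoints" is not substantiated.

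The paper takes an entirely different route that produces the extra factor of $h$ structurally rather than by reshuffling the energy identity. It expands $\theta(t)=\sum_n\gamma_n w_h^{(n)}$ in the $m_h$-orthonormal eigenbasis of \eqref{eq:eigenvalue}, uses the explicit Duhamel coefficients \eqref{eq:gamma_n}, the elementary bounds $\lvert\sin(\mu_h^{(n)}t)/\mu_h^{(n)}\rvert\le C\max\{1,t\}$, Parseval with respect to $m_h$, and Jensen in time. In that representation the coefficient of $\cos(\mu_h^{(n)}t)$ is $m_h(\theta(0),w_h^{(n)})$ with no spectral weight, so the initial data enter only through $\|\theta(0)\|_{0,h}$ and $\|\theta_t(0)\|_{0,h}$ --- never through $a_h(\theta(0),\theta(0))^{1/2}$ --- and the $L^2$ bound \eqref{eq:energial2} then yields $h^{k+1}\lvert u_0\rvert_{k+1}$ directly (this is also where convexity of $\Omega$ is used, an assumption your sketch never invokes). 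The division by $\mu_h^{(n)}$ inside the Duhamel integral is likewise what converts the forcing terms into $L^2$-in-time norms. Note finally that the paper's proof actually estimates $\|\theta(t)\|_{0,h}$, i.e.\ the undifferentiated error $u_h(t)-u(t)$ in $L^2$ (the left-hand side of \eqref{eq:stimadiscretol2} as printed appears to be a slip); your attempt to bound $\|\theta_t(t)\|_0$ with the improved initial-data rate is genuinely harder, since differentiating the spectral expansion reintroduces a factor $\mu_h^{(n)}$ in front of $m_h(\theta(0),w_h^{(n)})$ and hence the energy norm of $\theta(0)$.
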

\begin{proof}
As before in \eqref{eq:thetarho}, let us set $u_h(t) - u(t) = \theta(t) + \rho( t)$. The $L^2$ norm of the term $\rho(t)$ can be bounded as in \eqref{eq:rhol2}. Now since $\theta(t)$ solves the PDE \eqref{eq:thetahyp*}, recalling \eqref{eq:hyper vir}, we have that
\[
\theta(t) = \sum_{n=1}^{Ndof} \gamma_n \, w_h^{(n)}
\]
where
\begin{multline}
\label{eq:gamma_n}
\gamma_n = m_h(\theta(0), \, w_h^{(n)}) \, \cos (\mu_h^{(n)} t)  + \frac{1}{\mu_h^{(n)} } m_h(\theta_t(0), \,w_h^{(n)}) \, \sin (\mu_h^{(n)} t)  +\\  
  + \frac{1}{\mu_h^{(n)} } \, \int_0^t \langle \phi(s) + \eta(s), \, w_h^{(n)} \rangle \,  \sin (\mu_h^{(n)} (t-s))  \, {\rm d}s.
\end{multline}
Considering that $\set{w_h^{(n)}}_{1, \dots, Ndof}$ is an orthonormal basis of $W_h$ with respect to $m_h(\cdot, \cdot)$ it holds that
\begin{equation}
\label{eq:sumgamma}
\|\theta(t)\|^2_{0,h} = m_h(\theta(t), \, \theta(t)) = \sum_{n=1}^{Ndof} |\gamma_n|^2.
\end{equation}
Some simple computations yield
\begin{gather*}
0 \leq \mu_h^{(n)} \leq \epsilon \qquad \text{then} \qquad  \frac{ \sin (\mu_h^{(n)} t) }{\mu_h^{(n)}} \leq C \, t \quad \text{for all $t \in (0,T)$,} \\
\mu_h^{(n)} \geq \epsilon  \qquad \text{then} \qquad  \frac{ \sin (\mu_h^{(n)} t) }{\mu_h^{(n)}} \leq C  \quad \text{for all $t \in (0,T)$,} \\
\end{gather*}
for $\epsilon$ small enough. Therefore, from Jensen inequality, we get 
\[
|\gamma_n|^2 \leq C(t) \, \left(m_h(\theta(0), \, w_h^{(n)})^2  +  \, m_h(\theta_t(0), \, w_h^{(n)})^2 +  \int_0^t \left( \langle \phi(s) + \eta(s), \, w_h^{(n)} \rangle \right)^2  {\rm d}s  \right),
\]
where $C(t) := \max \{1, t^2\}$. From \eqref{eq:sumgamma} it follows that
\begin{equation}
\label{eq:thetal2l2}
\begin{split}
\|\theta(t)\|^2_{0,h} & \leq C(t) \,  \sum_{n=1}^{Ndof} \left(m_h(\theta(0), \, w_h^{(n)})^2  +   m_h(\theta_t(0), \, w_h^{(n)})^2 +  \int_0^t \left(  \langle \phi(s) + \eta(s), \, w_h^{(n)} \rangle  \right)^2  {\rm d}s \right) \\
& \leq C(t) \left( \|\theta(0)\|^2_{0,h}  + \|\theta_t(0)\|^2_{0,h} +   \int_0^t \sum_{n=1}^{Ndof} \left(  \langle \phi(s) + \eta(s), \, w_h^{(n)} \rangle \right)^2 {\rm d}s  \right).
\end{split}
\end{equation}
Now, from the definition \eqref{proiezionel2} we can set
\[
\langle \phi(s) + \eta(s), \, w_h^{(n)} \rangle = m_h(\mathcal{P}^{0} ( \phi(s) + \eta(s)), \,  w_h^{(n)}) 
\]
therefore, since $\set{w_h^{(n)}}_{1, \dots, Ndof}$ is an orthonormal basis of $W_h$ with respect to $m_h(\cdot, \cdot)$, we get
\[
\begin{split}
\sum_{n=1}^{Ndof} \left(  \langle \phi(s) + \eta(s), \, w_h^{(n)} \rangle \right)^2 &= \sum_{n=1}^{Ndof} \left( m_h(\mathcal{P}^{0} ( \phi(s) + \eta(s)), \,  w_h^{(n)}) \right)^2 = \|\mathcal{P}^{0} ( \phi(s) + \eta(s))\|^2_{0, h}.
\end{split}
\]
It is easy to see that, from \eqref{proiezionel2} and from the equivalence between the discrete and the continuous $L^2$ norm, we obtain
\[
\sum_{n=1}^{Ndof} \left(  \langle \phi(s) + \eta(s), \, w_h^{(n)} \rangle \right)^2 = \|\mathcal{P}^{0} ( \phi(s) + \eta(s))\|^2_{0, h} \leq C \, \| \phi(s) + \eta(s)\|^2_{0},
\]
therefore from \eqref{eq:thetal2l2}, according with estimates \eqref{eq:I+} and \eqref{eq:II+} we take
\begin{equation}
\label{eq:thetadef}
\|\theta(t)\|^2_{0,h} \leq C(t) \left( \|\theta(0)\|_{0,h}^2 + \|\theta_t(0)\|_{0,h}^2 + h^{k+1} \, |u_{tt}|_{L^2(0,t, \, H^{k+1}(\Omega))} + h^{k+1} \, |f|_{L^2(0,t, \,H^{k+1}(\Omega))}  \right).
\end{equation}
Collecting \eqref{eq:rhol2} and \eqref{eq:thetadef}, similar argument of Theorem \ref{thm:first} give the thesis.
\end{proof}

% -------------------------------------------------------------------
\section{Fully discrete problems}
\label{sec:4}
% -------------------------------------------------------------------

Since the error analysis of the time discretisation follows a standard procedure, we focus mainly on the error between the continuous problem \eqref{eq:hyper var} and the semi-discrete problem \eqref{eq:hyper vir}. In this section we show an example of analysis for the fully discrete case.

Theoretically, the error generated by a fully discrete scheme has two components: the error due to the spatial discretization  depending on the mesh size $h$, and the  error created by the time integrator depending on the time step size $\tau$. In particular let
$\set{u_h^n}_{n=0, \dots, N}$ be the sequence generated by a time integrator method $\mathcal{I}$ for the ODE \eqref{eq:hyper vir}, with $u_h^n \approx u_h(\cdot, t_n)$, $t_n = n \tau$, for $n=0,\dots, N$ and $\tau = T/N$. Then we expect that
\begin{equation}
\label{eq:teorica}
\|u_h^n - u(\cdot, t_n)\|_0 \leq C_1 \, h^{k+1} +  C_2 \, \tau^p,
\end{equation}
where $p$ is the order of the method $\mathcal{I}$, and $C_1$ and $C_2$ are two $h$ and $\tau$ independent constants. 

As already mentioned, since the novelty of the present paper is the spatial discretisation, we focus mainly on the first (spatial) source of error, as shown in Theorems \ref{thm:first} and \ref{thm:semidiscretol2}. Nevertheless, in order to detail the behaviour of the method, we here consider the case of the \textbf{Newmark method} and the \textbf{Bathe method} coupled with the VEM discretisation \eqref{eq:hyper vir}. 
The Newmark method  (see \cite{newmark, raviart})  for the ODE \eqref{eq:hyper vir}, is defined by
\begin{equation}
\label{eq:hyper dis}
\left\{
\begin{aligned}
&m_h\left( \frac{u_h^{n+1} - u_h^n - \tau \, z_h^n}{\tau^2}, v_h\right) + a_h \left(\beta u_h^{n+1} + \left( \frac{1}{2} - \beta \right) u_h^n, \, v_h \right) =  \langle \beta f_h^{n+1} + \left( \frac{1}{2} - \beta \right)f_h^n,   v_h \rangle   \\
&m_h\left( \frac{z_h^{n+1} - z_h^n}{\tau}, v_h\right) + a_h \left(\gamma u_h^{n+1} + ( 1 - \gamma) u_h^n, \, v_h \right) = \langle \gamma f_h^{n+1} + (1- \gamma)f_h^n, \, v_h \rangle  \\
&u_h^0 = u_{h,0}, \qquad z_h^0 =z_{h,0}
\end{aligned}
\right.
\end{equation}
or equivalently
\begin{equation}
\label{eq:hyper disbis}
\left\{
\begin{aligned}
&m_h\left( \frac{u_h^{n+2} - 2u_h^{n+1} + u_h^n}{\tau^2}, v_h\right) + a_h \left(\beta u_h^{n+2} + \left( \frac{1}{2} - 2\beta + \gamma\right) u_h^{n+1} + \left( \frac{1}{2} + \beta - \gamma\right) u_h^{n}, v_h \right) \\
& \qquad \qquad \qquad \qquad \qquad \qquad \qquad = \langle \beta f_h^{n+2} + \left( \frac{1}{2} - 2\beta + \gamma\right) f_h^{n+1} + \left( \frac{1}{2} + \beta - \gamma\right) f_h^{n}, v_h \rangle   \\
&m_h\left( \frac{u_h^{1} - u_{h,0} - \tau \, z_{h,0}}{\tau^2}, v_h\right) + a_h \left(\beta u_h^{1} + \left( \frac{1}{2} - \beta \right) u_{h,0}, \, v_h \right) =  \langle \beta f_h^{1} + \left( \frac{1}{2} - \beta \right)f_h^0, v_h  \rangle \\
& u_h^0 = u_{h, 0}
\end{aligned}
\right.
\end{equation}
where $f_h^n = f_h(t_n)$ for $n=0,\dots, N$, while $\beta \geq 0$ and $\gamma \geq 1/2$ are free parameters that still can be chosen. From the literature, we recall the following facts:
\begin{itemize}
\item \textbf{convergence}: The Newmark scheme is at least of order one; the order two is achieved only for the choice $\gamma = 1/2$;
\item \textbf{stability}: The second-order Newmark scheme with $\gamma = 1/2$ is unconditionally stable for $\beta \geq 1/4$, whereas for $0 \leq \beta  < 1/4$ the time step $\tau$ has to be restricted by the \textbf{CFL condition}
\[
\lambda_h^{(Ndof)} \, \tau^2 \leq \frac{4}{1 - 4 \beta} (1 - \epsilon), \qquad \text{for $\epsilon \in (0, 1)$.}
\]
\end{itemize}

It is well known (see \cite{assessment}) that the widely used Newmark trapezoidal rule (corresponding to $\gamma = 1/2$ and $\beta = 1/4$) does not present the numerical dumping, i.e. this technique is affected by spurious oscillations, especially for high wave numbers, that can severely ruin the accuracy of the solution.  The Bathe method \cite{bathe1, bathe2, bathe3, bathe4}  is, indeed,  quite effective in the solution of wave propagation problems. The Bathe method for ODE \eqref{eq:hyper vir} has the following linear multistep form  
\begin{equation}
\label{eq:hyper bathe}
\left\{
\begin{aligned}
& m_h\left( \frac{72 u_h^{n+1} - 144 u_h^{n+1/2} + 72 u_h^n}{\tau^2}, v_h \right) + a_h (8 u_h^{n+1} + 5 u_h^{n+1/2} + 5 u_h^{n}, v_h ) =\\
& \qquad \qquad \qquad \qquad \qquad \qquad \qquad \qquad \qquad \qquad \qquad \qquad \qquad \quad  \langle 8 f_h^{n+1} + 5 f_h^{n+1/2} + 5 f_h^{n}, v_h  \rangle \\
& m_h\left( \frac{16 u_h^{n+1/2} -  16 u_{h, 0} - \tau 8 z_{h, 0} }{\tau^2}, v_h \right) + a_h (u_h^{n+1/2} -  u_{h, 0}, v_h ) = \langle f_h^{1/2} - f_h^{0}, v_h  \rangle \\
& u_h^0 = u_{h, 0}.
\end{aligned}
\right.
\end{equation}
For the Bathe method we have the following properties:
\begin{itemize}
\item \textbf{convergence}: The Bathe method has order two; 
\item \textbf{stability}: The method has no parameter to choose or adjust, by
the analyst, for specific analysis cases. The scheme is stable even in large deformation and long time response solutions when the trapezoidal rule fails.
\end{itemize}

%----------------------------------------------------------------------------------------------------------------------------

\section{Numerical Tests}
\label{sec:5}
In this section we present two numerical experiments to test the practical performance of the method. 
In the first  test we compute the error in the $H^1$ semi-norm and in  $L^2$ norm for a given hyperbolic problem. We investigate also the behaviour of the method when we use  a non stabilized form $m_h$. 
The second experiment investigates the performance of the Bathe method in the solution of wave propagation problems compared with the Newmark trapezoidal rule.
%

%

%\newpage

\begin{test}
\label{test1}
Let us consider the parabolic equation \eqref{eq:hyper var} where  the load term $f$, the initial data $u_0$ and $z_0$ are chosen in accordance with the exact solution 
\begin{equation}
\label{eq:soluzione1}
u(t,x_1, x_2) =\sin(t^2) \, \sin(\pi x_1)\sin(\pi x_2).
\end{equation}
In this test we consider the time interval $[0, 1]$ and  the domain $\Omega = [0,1] \times [0,1]$. We use the Voronoi meshes $\mathcal{V}_h$ (where $h= \frac{1}{5}2^{-i}$, with $i=0, \dots, 3$, is the mean value of the mesh size).  For the generation of the Voronoi meshes we used the code Polymesher \cite{TPPM12}. The adopted meshes are shown in Figure \ref{Figure1}.  

\begin{figure}[!h]
\centering
\includegraphics[scale=0.25, draft=false]{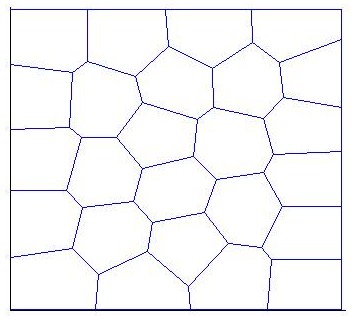}\quad
\includegraphics[scale=0.25, draft=false]{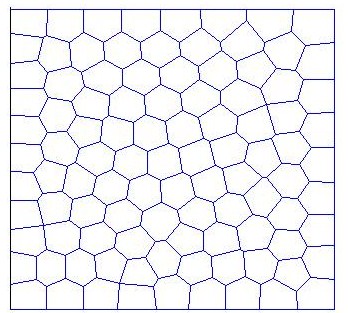}\quad
\includegraphics[scale=0.255, draft=false]{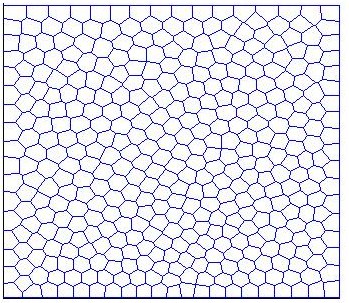}\quad
\includegraphics[scale=0.255, draft=false]{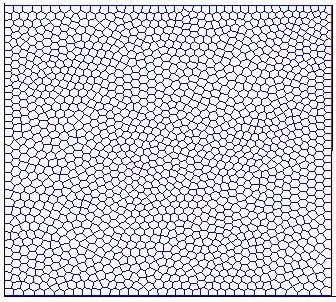}
\caption{Sequence of the adopted Voronoi meshes.}
\label{Figure1}
\end{figure}
The convergence of VEM technique  is evaluated in the discrete relative $H^1(\Omega)$ semi-norm $L^2(\Omega)$ norm of  $\delta_h := u^I - u_h$ where $u^I$ is the interpolant of the exact solution $u$, evaluate at the final time $T$, i.e. 
\[
E^1_{h,\tau} := \frac{|\delta_h|_{1, h}}{|u|_{1, h}}, \qquad E^1_{h,\tau} := \frac{\|\delta_h\|_{0, h}}{\|u\|_{0, h}}.
\]
We implement the fully discrete problem with the Newmark trapezoidal method coupled with the VEM discretisation for  the sequences of polygonal meshes $\mathcal{V}_h$. The orders of approximation are $k=1$ and $k=2$. In Tables \ref{table1} and \ref{table2} we show the values of the relative errors $E^1_{h,\tau}$ and $E^0_{h,\tau}$.

\begin{table}[!h]
\centering
\begin{tabular}{ll*{4}{c}}
\toprule
&              & $\tau = 1/5$        & $\tau =  1/10$      & $\tau = 1/20$       & $\tau = 1/40$       \\
\midrule
\multirow{4}*{$k=1$}
& $h=1/5$   & $3.528027e-02$ &  $3.402436e-02$ &  $3.394938e-02$ & $3.393157e-02$\\
& $h=1/10$ & $2.076253e-02$ &  $1.597628e-02$ &  $1.587273e-02$ & $1.590796e-02$ \\
& $h=1/20$ & $1.650362e-02$ &  $7.653534e-03$ &  $6.847137e-03$ & $6.841819e-03$ \\
& $h=1/40$ & $1.587896e-02$ &  $5.192064e-03$ &  $3.554348e-03$ & $3.452997e-03$ \\
\midrule
\multirow{4}*{$k=2$}
& $h=1/5$    & $7.563951e-02$ &  $1.034244e-01$ &  $6.890232e-02$ & $7.358191e-02$\\
& $h=1/10$  & $1.331417e-02$ &  $2.354252e-02$ &  $1.510415e-02$ & $1.727028e-02$ \\
& $h=1/20$  & $4.521837e-03$ &  $3.786975e-03$ &  $3.367110e-03$ & $4.267637e-03$ \\
& $h=1/40$  & $1.252141e-03$ &  $1.240103e-03$ &  $1.205809e-03$ & $9.117119e-04$ \\
\bottomrule
\end{tabular}
\caption{$E^1_{h, \tau}$ error for the meshes $\mathcal{V}_h$ for $k=1$ and $k=2$.}
\label{table1}
\end{table}

\begin{table}[!h]
\centering
\begin{tabular}{ll*{4}{c}}
\toprule
&              & $\tau = 1/5$        & $\tau =  1/10$      & $\tau = 1/20$       & $\tau = 1/40$       \\
\midrule
\multirow{4}*{$k=1$}
& $h=1/5$    & $1.525822e-02$ &  $1.097503e-02$ &  $1.123438e-02$ & $1.139287e-02$\\
& $h=1/10$  & $1.368594e-02$ &  $3.405071e-03$ &  $2.917899e-03$ & $3.241443e-03$ \\
& $h=1/20$  & $1.503495e-02$ &  $3.497376e-03$ &  $7.462431e-04$ & $7.032843e-04$ \\
& $h=1/40$  & $1.550013e-02$ &  $3.881783e-03$ &  $8.608672e-04$ & $1.784726e-04$ \\
\midrule
\multirow{4}*{$k=2$}
& $h=1/5$   & $1.737477e-02$  & $2.310717e-02$ &  $1.513967e-02$ & $1.647027e-02$\\
& $h=1/10$ & $1.432602e-03$  & $2.597125e-03$ &  $1.626653e-03$ & $1.893847e-03$ \\
& $h=1/20$ & $2.941979e-04$  & $2.088662e-04$ &  $1.864353e-04$ & $2.412417e-04$ \\
& $h=1/40$ & $1.679805e-04$ &  $4.656927e-05$ &  $3.446027e-05$ & $2.578508e-05$ \\
\bottomrule
\end{tabular}
\caption{$E^0_{h, \tau}$ error for the meshes $\mathcal{V}_h$ for $k=1$ and $k=2$.}
\label{table2}
\end{table}
In this test we notice that, recalling \eqref{eq:teorica}, the error due to the spatial discretization and the error generated by the time discretisation has the same weight. In particular, for small values of $\tau$, we can observe that we obtain the expected order of convergence in $h$. For big values of $h$, we have that the error is almost constant in $\tau$.

We consider the same hyperbolic problem \eqref{eq:soluzione1} and we study the behaviour of the VEM approximation using a non stabilized bilinear form $m_h(\cdot, \cdot)$ obtained using $\mathcal{R}^E \equiv 0$ in \eqref{eq:m_hlocalform}.
We consider as before the Newmark method coupled with the VEM approximation of order $k=1, 2$ for the usual sequences of Voronoi meshes $\mathcal{V}_h$. The results are illustrated in  Tables \ref{table3} and \ref{table4}, the errors being evaluated as usual in the $E^1_{h, \tau}$ and norm $E^0_{h, \tau}$.

\begin{table}[!h]
\centering
\begin{tabular}{ll*{4}{c}}
\toprule
&              & $\tau = 1/5$        & $\tau =  1/10$      & $\tau = 1/20$       & $\tau = 1/40$       \\
\midrule
\multirow{4}*{$k=1$}
& $h=1/5$    & $3.528854e-02$ &  $3.361425e-02$ &  $3.374885e-02$ & $3.380632e-02$\\
& $h=1/10$  & $2.071838e-02$ &  $1.592896e-03$ &  $1.582944e-02$ & $1.589366e-02$ \\
& $h=1/20$  & $1.650164e-02$ &  $7.648859e-03$ &  $6.843560e-03$ & $6.838834e-03$ \\
& $h=1/40$  & $1.550013e-02$ &  $5.192021e-03$ &  $3.553935e-03$ & $3.452686e-03$ \\
\midrule
\multirow{4}*{$k=2$}
& $h=1/5$    & $9.149928e-02$ &  $7.437253e-02$ &  $8.727999e-02$ & $7.917469e-02$\\
& $h=1/10$  & $2.372960e-03$ &  $1.841401e-02$ &  $1.649658e-02$ & $2.000535e-02$ \\
& $h=1/20$  & $5.509267e-03$ &  $3.211032e-03$ &  $3.362645e-03$ & $3.816907e-03$ \\
& $h=1/40$  & $1.318321e-03$ &  $6.451261e-04$ &  $8.166188e-04$ & $7.931607e-04$ \\
\bottomrule
\end{tabular}
\caption{$E^1_{h, \tau}$ error for the meshes $\mathcal{V}_h$ for $k=1$ and $k=2$.}
\label{table3}
\end{table}

\begin{table}[!h]
\centering
\begin{tabular}{ll*{4}{c}}
\toprule
&              & $\tau = 1/5$        & $\tau =  1/10$      & $\tau = 1/20$       & $\tau = 1/40$       \\
\midrule
\multirow{4}*{$k=1$}
& $h=1/5$    & $1.180067e-02$ &  $5.070831e-03$ &  $5.667866e-03$ & $6.001536e-03$\\
& $h=1/10$  & $1.351103e-02$ &  $2.650216e-03$ &  $1.988189e-03$ & $2.442523e-03$ \\
& $h=1/20$  & $1.502775e-02$ &  $3.467810e-03$ &  $5.932316e-04$ & $5.383329e-04$ \\
& $h=1/40$  & $1.549971e-02$ &  $3.880175e-03$ &  $8.536129e-04$ & $1.393609e-04$ \\
\midrule
\multirow{4}*{$k=2$}
& $h=1/5$   & $7.946409e-03$  & $8.532940e-03$ &  $8.380126e-03$ & $8.142198e-03$\\
& $h=1/10$ & $5.954846e-04$  & $6.483602e-04$ &  $5.820595e-04$ & $6.189287e-04$ \\
& $h=1/20$ & $1.649159e-04$  & $5.905337e-05$ &  $5.288382e-05$ & $5.507499e-05$ \\
& $h=1/40$ & $1.644686e-04$ &  $3.106303e-05$ &  $8.861383e-06$ & $5.766137e-06$ \\
\bottomrule
\end{tabular}
\caption{$E^0_{h, \tau}$ error for the meshes $\mathcal{V}_h$ for $k=1$ and $k=2$.}
\label{table4}
\end{table}
Comparing the results of Tables  \ref{table1} and \ref{table2} with those of Tables \ref{table3} and \ref{table4}, we can observe that the errors generated by the VEM method with original and reduced bilinear form are indeed very close, thus showing the good behaviour of the proposed alternative. However we remark that, as observed in \cite{vaccabeirao}, if we take a non stabilized bilinear form, the maximum discrete eigenvalue $\lambda_h^{(Ndof}$ is not bounded since the mass matrix is possibly singular. Then we have to be careful if we use a Newmark method with $\beta < 1/4$, in particular if we want to suppress the high frequency spurious waves.
\end{test}

\begin{test}
\label{test2}
Let us consider the parabolic equation \eqref{eq:hyper var} where the initial displacement and the initial velocity are zero, and 
the load term $f$ is
\begin{equation}
\label{eq:loadterm}
f(t,x_1, x_2) := 
\left \{
\begin{aligned}
& 100  \qquad & \text{for $t < 0.1$ and $(x_1, x_2) = (0.05, 0.05)$,} \\
& 0                              \qquad & \text{otherwise,}
\end{aligned}
\right .
\end{equation}
We consider the final time $T = 1.2$ and the domain $\Omega = [0, 1] \times [0, 1]$. Let $\mathbf{CFL} = \frac{\tau}{h}$, then we test the VEM technique with $k=1$ coupled with the Bathe method and the Newmark trapezoidal rule with different values of $CLF$, in particular we use a square decomposition of the domain with $h=1/100$ and we consider $\tau = 1/20, 1/40, 1/80$.
In Figure \ref{Figure2}, \ref{Figure3} \ref{Figure4}  we compare the discrete displacement $u_h$, and the discrete velocity $u_{h, t}$ along the diagonal $(0, 0)$ -$(1, 1)$ calculated using the Bathe method and the trapezoidal rule at final time. We observe that for small values of CLF the Newmark trapezoidal rule gives spurious oscillations.

\begin{figure}[!h]
\center{
\includegraphics[scale=0.20, draft=false]{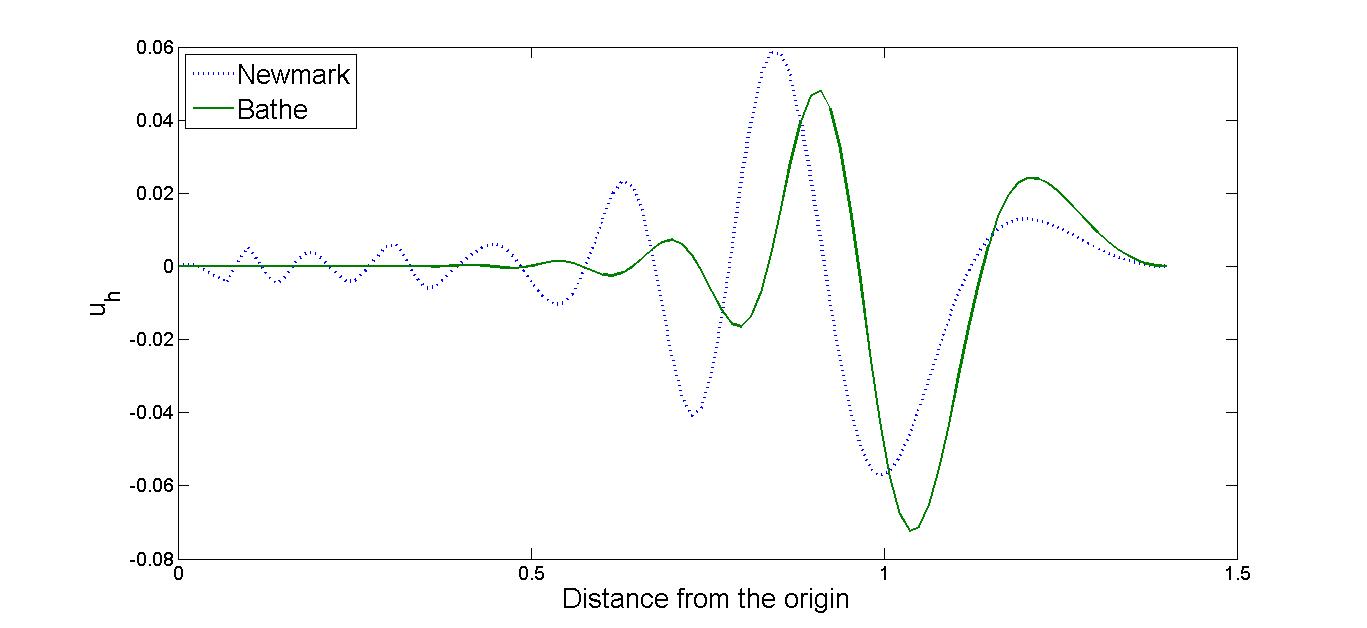} \quad
\includegraphics[scale=0.20, draft=false]{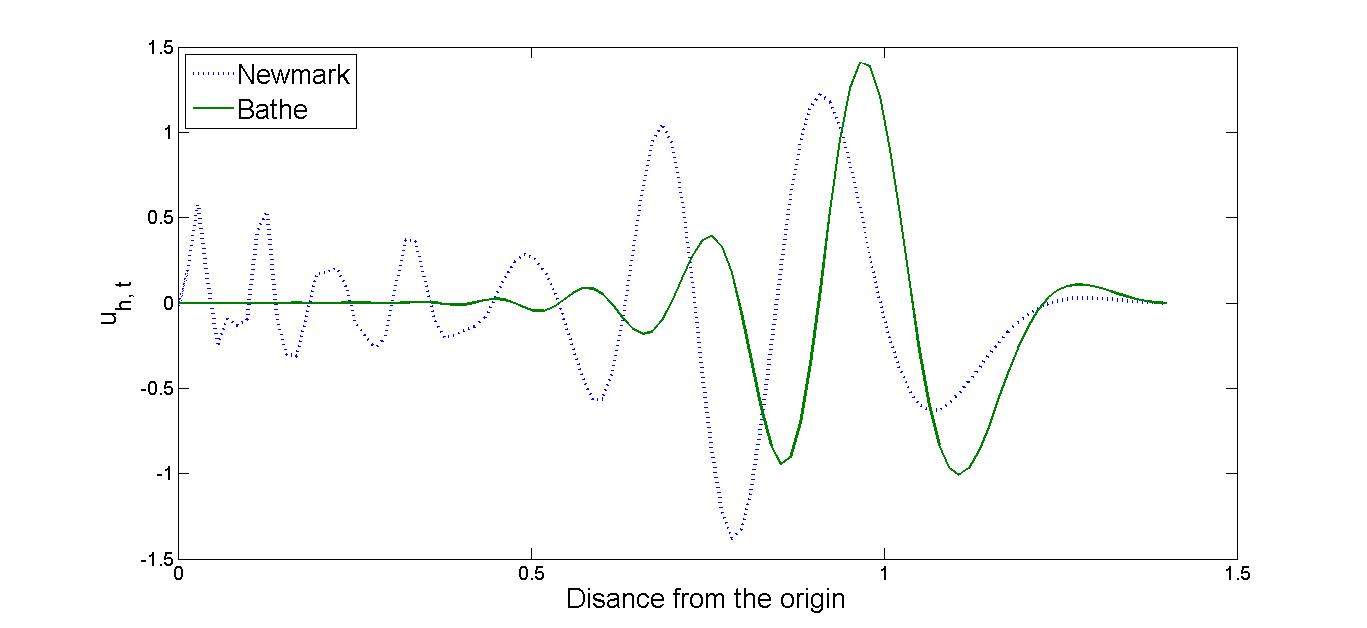} 
\caption{Displacement variations and velocity variations along the diagonal with $\tau = 1/20$.}
\label{Figure2}
}
\end{figure}
\begin{figure}[!h]
\center{
\includegraphics[scale=0.20, draft=false]{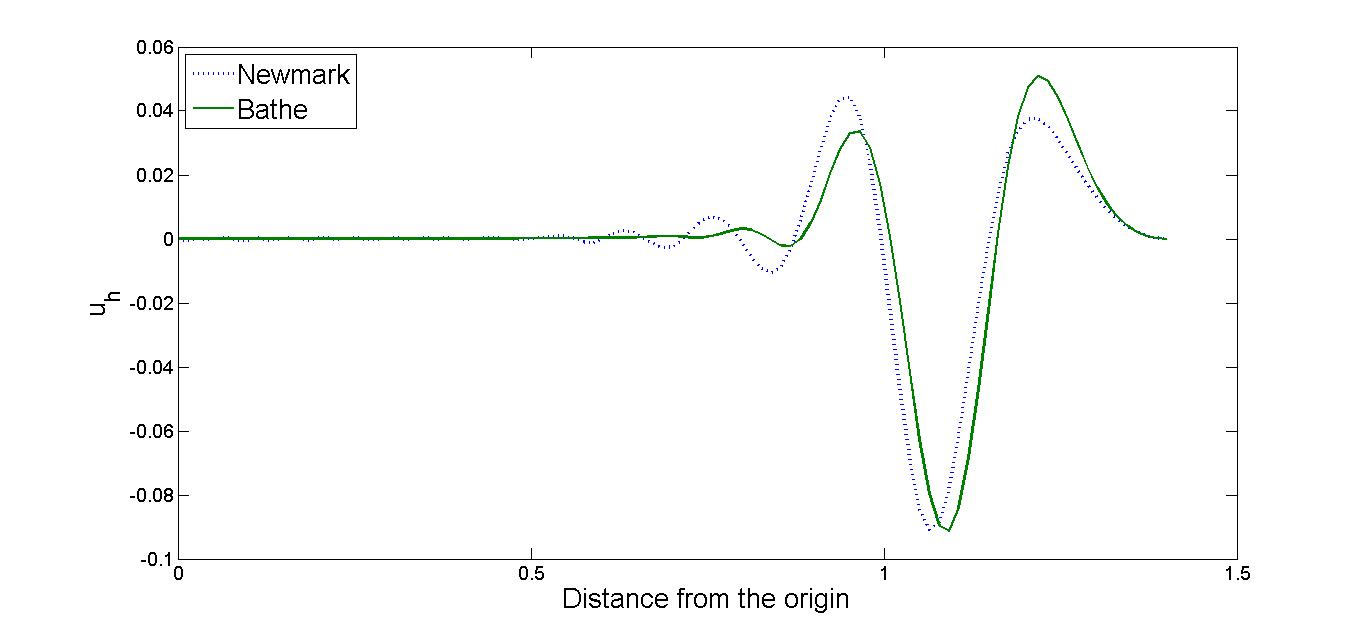} \quad
\includegraphics[scale=0.20, draft=false]{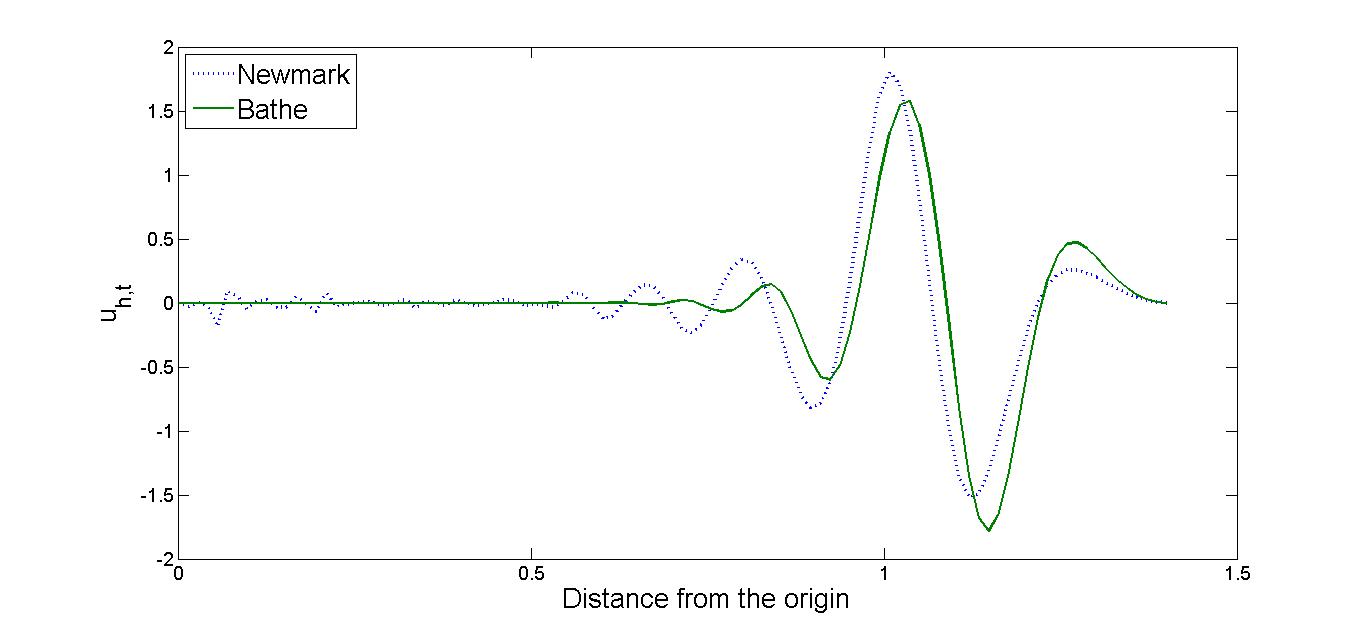} 
\caption{Displacement variations and velocity variations along the diagonal with $\tau = 1/40$.}
\label{Figure3}
}
\end{figure}
\begin{figure}[!h]
\center{
\includegraphics[scale=0.20, draft=false]{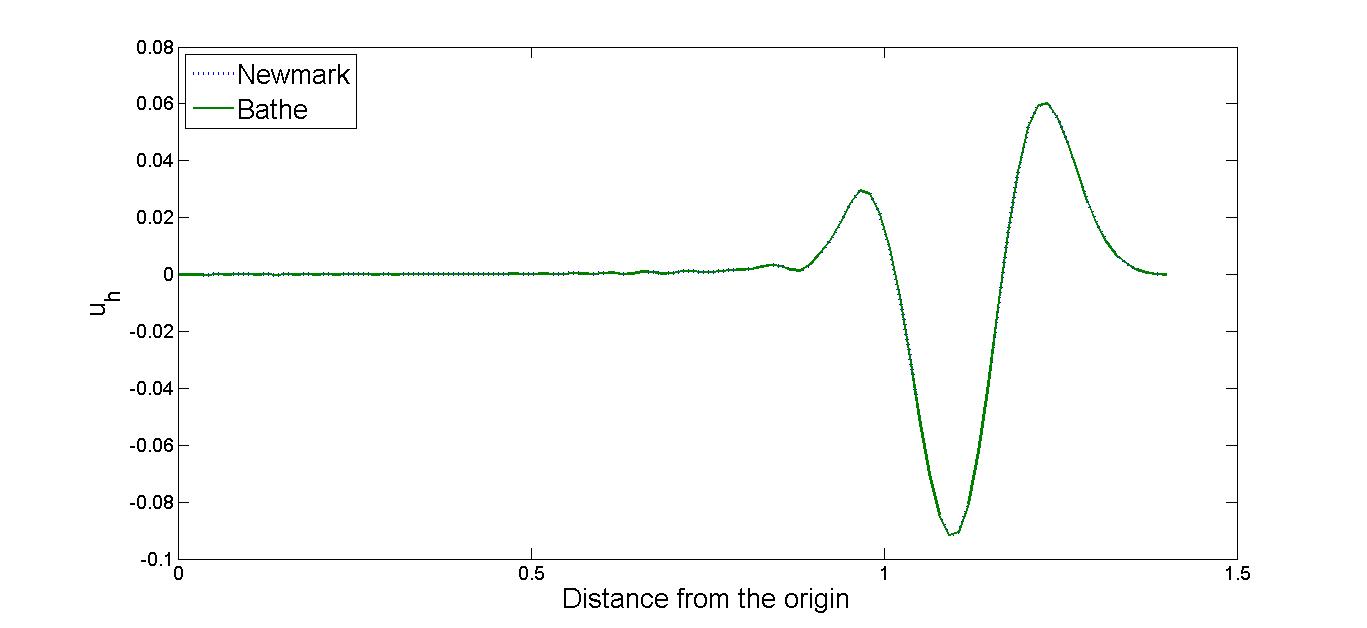} \quad
\includegraphics[scale=0.20, draft=false]{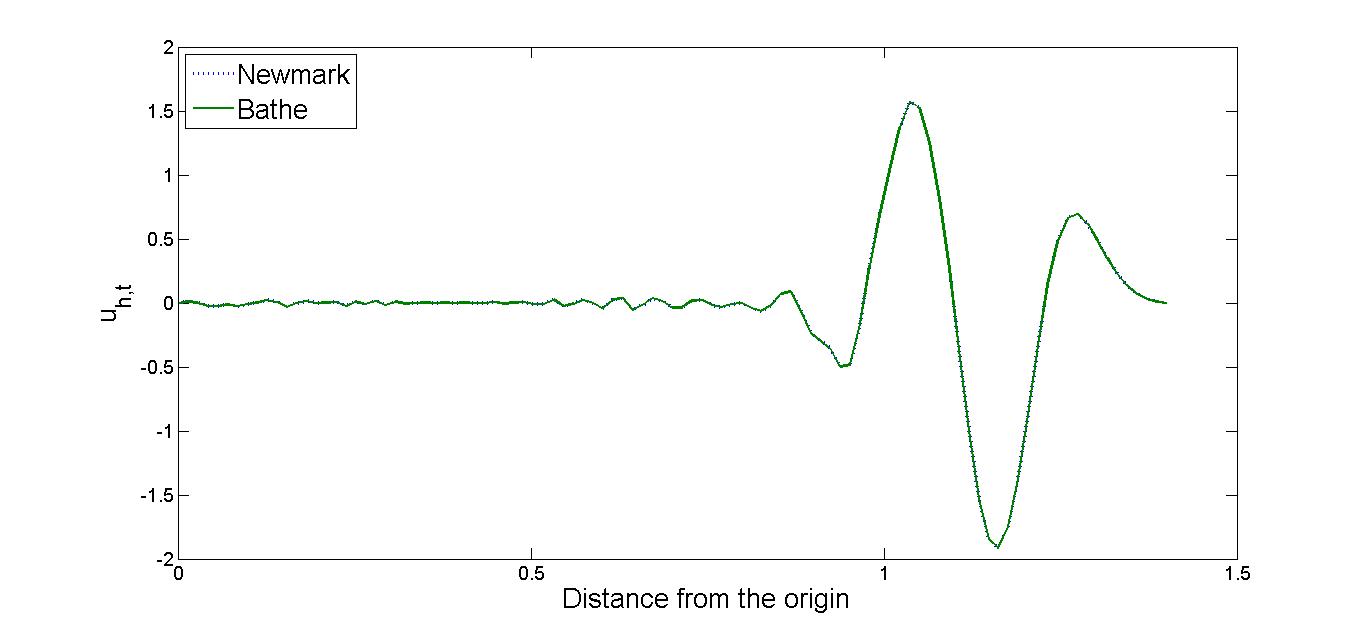} 
\caption{Displacement variations and velocity variations along the diagonal with $\tau = 1/80$.}
\label{Figure4}
}
\end{figure}
Figure \ref{Figure5} (\ref{Figure6}) shows the snapshots of the solution variable u  (the time derivative $u_t$) calculated
using the Bathe method and the Newmark method for $\tau = 1/20$.
\begin{figure}[!h]
\center{
\includegraphics[scale=0.20, draft=false]{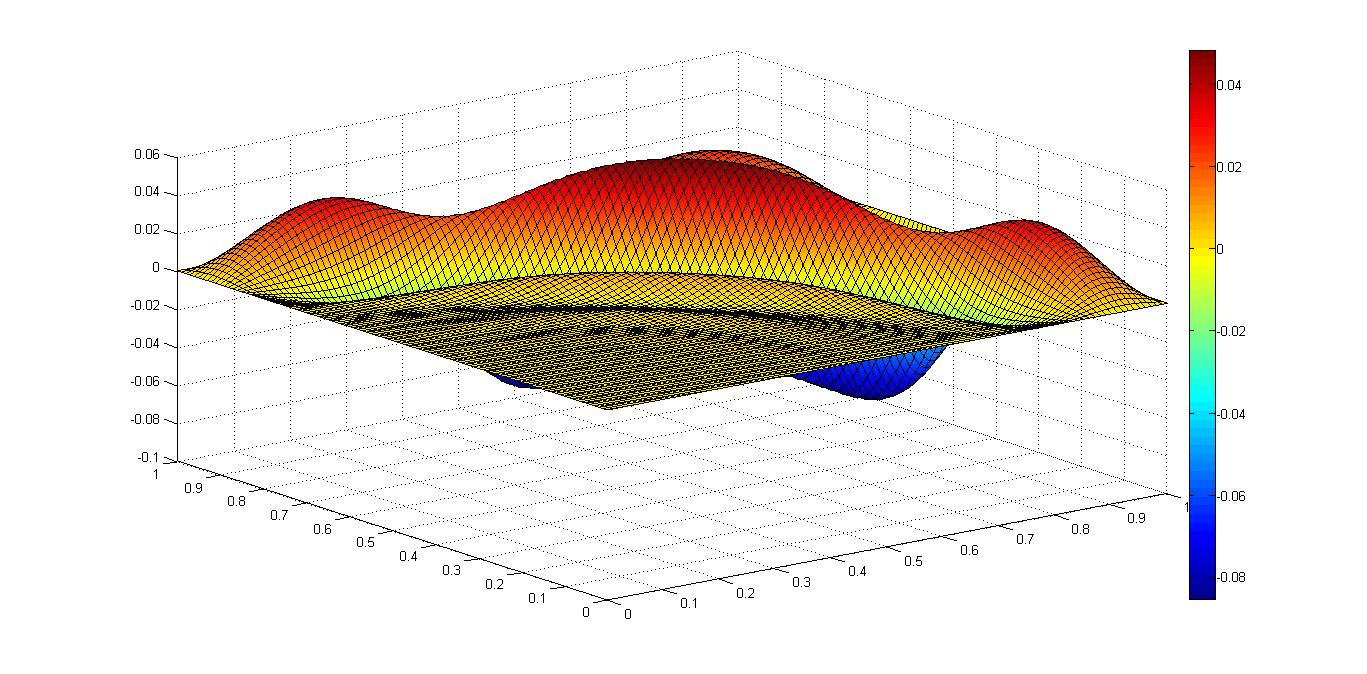} \quad
\includegraphics[scale=0.20, draft=false]{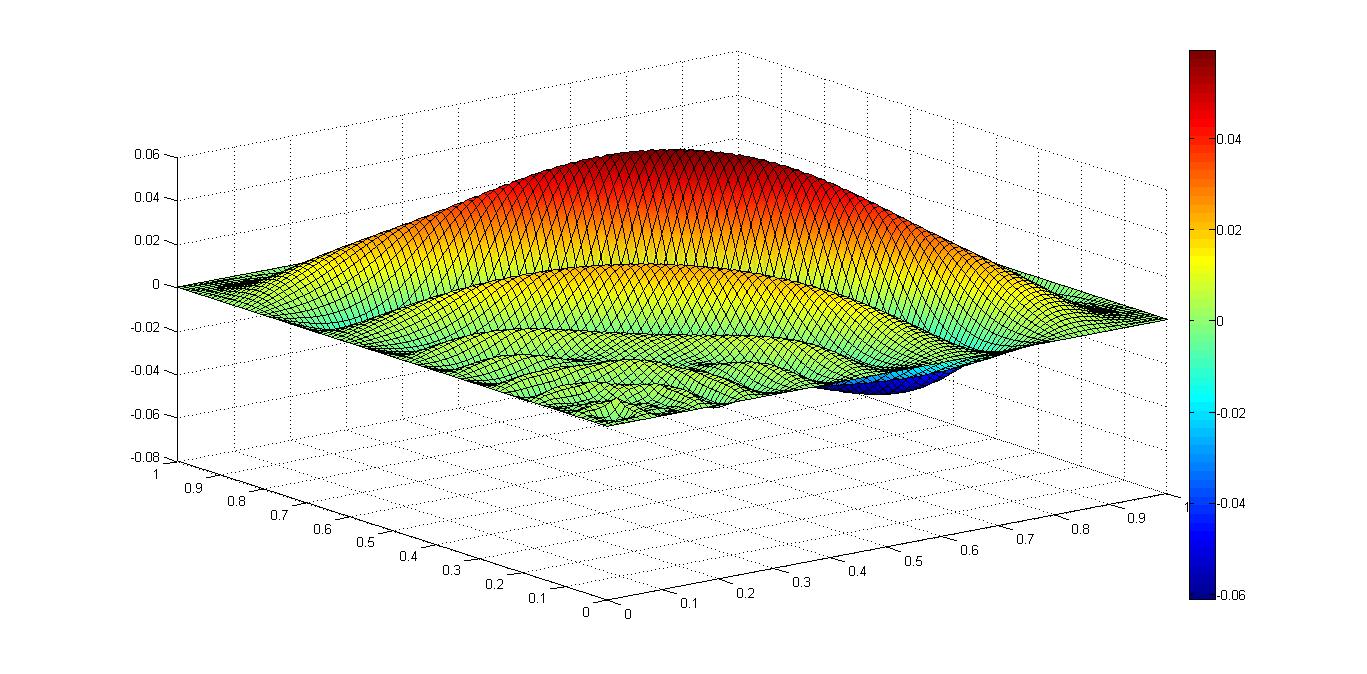}
\caption{Snapshots of displacements computed with the Bathe method and the Newmark method.}
\label{Figure5}
}
\end{figure}
\begin{figure}[!h]
\center{
\includegraphics[scale=0.20, draft=false]{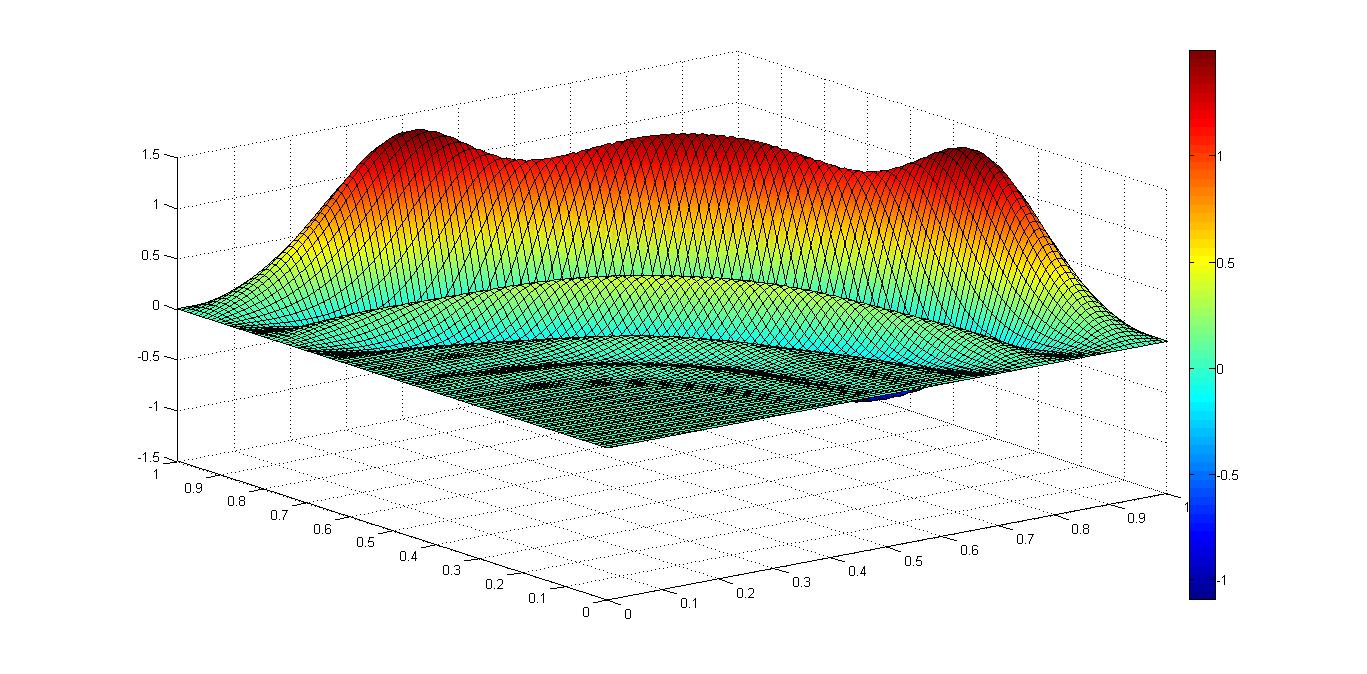} \quad
\includegraphics[scale=0.20, draft=false]{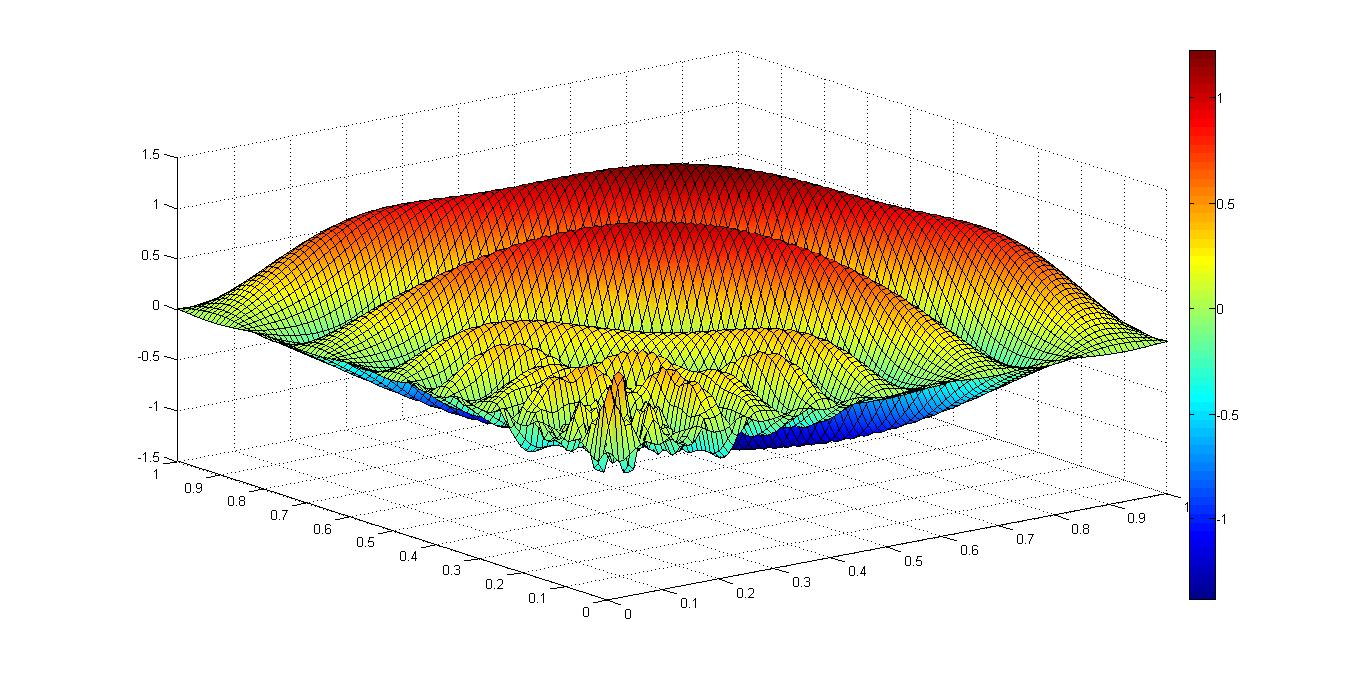}
\caption{Snapshots of velocities computed with the Bathe method and the Newmark method.}
\label{Figure6}
}
\end{figure}
We can observe that the Bathe method significantly improves the accuracy of the solution and is not is affected by spurious oscillations also for small values of CLF.
\end{test}

\section{Acknowledgements}
The author wishes to thank Luciano Lopez for several interesting discussions and suggestions on the paper.

The author wishes to thank the National Group of Scientific Computing (GNCS-INDAM) that through the project ``Finanziamento Giovani Ricercatori 2015-2016'' has supported this research.

\addcontentsline{toc}{section}{\refname}
\bibliographystyle{plain}
\bibliography{bibliografia}

%%%%%%%%%%%%%%%%%%%%%%%%%%%%%%%%%%%%%%%%%%%%%%%%%%%%%%%%%%%%%%%%%%%%%%%
\end{document}